\newtheorem{thmA}{Theorem}
\newtheorem{theorem}{Theorem}[section]
\newtheorem{thm}[theorem]{Theorem}
\newtheorem{lemma}[theorem]{Lemma}
\newtheorem{prop}[theorem]{Proposition}
\theoremstyle{definition}
\newtheorem{definition}[theorem]{Definition}
\theoremstyle{remark}
\newtheorem{remark}[theorem]{Remark}
\numberwithin{equation}{section}
\def\<{\langle}
\def\>{\rangle}
\def\C{\mathcal C}
\def\-{\overline}
\def\-{\underline}
\def\Z{\mathbb Z}
\def\G{\Gamma}
\def\C{\mathcal{C}}
\def\serieslogo@{\relax}
\def\@setcopyright{\relax}
\begin{document}

\title{Conjugacy separability of 1-acylindrical graphs of free groups}

% author  information
\author[Owen Cotton-Barratt]{Owen Cotton-Barratt}
\address{Mathematical Institute, 24-29 St Giles', Oxford OX1 3LB, UK}
\email{cotton-b@maths.ox.ac.uk}

\author[Henry Wilton]{Henry Wilton}
\address{Department of Mathematics, 1 University Station C1200, Austin, TX 78712, USA}
\email{henry.wilton@math.utexas.edu}

\date{30 May 2009}

\subjclass[2000]{20E06; 20E26}

\keywords{}

\thanks{}

\begin{abstract}
We use the theory of group actions on profinite trees to prove that the fundamental group of a finite, 1-acylindrical graph of free groups with finitely generated edge groups is conjugacy separable. This has several applications: we prove that positive, $C'(1/6)$ one-relator groups are conjugacy separable; we provide a conjugacy separable version of the Rips construction; we use this latter to provide an example of two finitely presented, residually finite groups that have isomorphic profinite completions, such that one is conjugacy separable and the other does not even have solvable conjugacy problem.
\end{abstract}

\maketitle

\section{Introduction}\label{s: Introduction}

A group $G$ is called \emph{conjugacy separable} if every conjugacy class is closed in the profinite topology on $G$.  This is a natural strengthening of the notion of residual finiteness, which asserts that the conjugacy class $\{1\}$ is closed.  A longstanding open problem in geometric group theory asks whether every (word-)hyperbolic group is residually finite, and it is equally natural to extend this question to ask which hyperbolic groups are conjugacy separable.

In this paper we prove that every member of a large class of hyperbolic groups is conjugacy separable.  A graph of groups is \emph{$k$-acylindrical} if only the trivial element of the fundamental group fixes a subset of the Bass--Serre tree of diameter greater than $k$.  In \cite{Wise2}, Wise studied finite, 1-acylindrical graphs of free groups and proved that their fundamental groups are residually finite (as long as the edge groups are finitely generated).  Our main theorem extends \cite{Wise2} to obtain the much stronger conclusion that the fundamental groups of such graphs of groups are conjugacy separable.

\begin{thmA}\label{t: Main Theorem}
If $\mathcal{G}$ is a finite, 1-acylindrical graph of free groups with finitely generated edge groups then $\pi_1(\mathcal{G})$ is conjugacy separable.
\end{thmA}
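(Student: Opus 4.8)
The plan is to reduce the statement to a question about the profinite completion and then answer it using the action of $\widehat{G}$ (where $G := \pi_1(\mathcal{G})$) on a profinite tree. Wise \cite{Wise2} has already shown that $G$ is residually finite, so $G$ embeds in $\widehat{G}$ and conjugacy separability of $G$ is equivalent to the implication: if $g, h \in G$ are conjugate in $\widehat{G}$, then they are conjugate in $G$. The first step is therefore to build the profinite Bass--Serre setup. The vertex groups, being free, are conjugacy separable and subgroup separable, so the finitely generated edge groups are closed in the vertex groups; together with Wise's residual finiteness this should show that $\mathcal{G}$ is \emph{efficient}, and hence that $\widehat{G}$ is the profinite fundamental group of the graph of profinite groups obtained by completing $\mathcal{G}$. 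Consequently $\widehat{G}$ acts on the standard profinite tree $\widehat{T}$, the ordinary Bass--Serre tree $T$ embeds $G$-equivariantly and densely in $\widehat{T}$, and the $\widehat{G}$-stabilizers of vertices and edges of $\widehat{T}$ are the closures in $\widehat{G}$ of the corresponding vertex and edge groups of $G$.

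The second step is to transfer the acylindricity hypothesis to $\widehat{T}$: I want the $\widehat{G}$-stabilizer of every path of length $\geq 2$ in $\widehat{T}$ to be trivial. At the level of $T$ this is Wise's $1$-acylindricity, which for a graph of free groups is a statement about intersections of finitely generated subgroups of free groups; the profinite analogue follows because, by Ribes--Zalesskii, products of finitely generated subgroups of a free group are closed in the profinite topology, so these intersections behave as expected on passing to closures. This triviality of path stabilizers is the main geometric input and should rigidly constrain fixed-point sets and axes in $\widehat{T}$.

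Now suppose $g, h \in G$ are conjugate in $\widehat{G}$, and argue by cases on whether $g$ (equivalently $h$) is hyperbolic or elliptic on $T$. If $g$ is hyperbolic, it admits an axis in $\widehat{T}$ whose intersection with $T$ is the usual axis, and the cyclic word recording, along the axis, the conjugacy classes of the edge elements and the interleaved vertex elements is a conjugacy invariant that a conjugator in $\widehat{G}$ must preserve; one then descends to a conjugator in $G$ by inducting on the translation length, at each stage resolving a conjugacy or double-coset-membership problem inside a free vertex group, which is legitimate since free groups are conjugacy separable and double-coset separable. If $g$ is elliptic, it lies in a conjugate of some free vertex group $G_v$; using that $\widehat{G}$ permutes the vertices of $\widehat{T}$ of type $v$ transitively with stabilizer $\overline{G_v}$, one reduces to $g, h \in G_v$, and then the profinite fixed subtrees of $g$ and of $h$ and the chain of edge groups joining them again reduce matters to separability statements internal to free vertex groups.

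The step I expect to be hardest is the \emph{descent} from $\widehat{T}$ to $T$: after matching axes, fixed subtrees or conjugacy chains inside $\widehat{G}$ one must manufacture an honest element of $G$ doing the same job, and this conjugator need not be compatible with the graph-of-groups decomposition --- in the elliptic case it may carry one vertex group to another. Turning the profinite matchings into such an element requires the efficiency of $\mathcal{G}$ and careful control of how the closures of edge and vertex groups sit inside $\widehat{G}$. Establishing efficiency itself --- equivalently, that the profinite topology on $G$ induces the full profinite topology on each vertex group --- is the other delicate point, though it ought to follow from the finite quotients produced in Wise's argument together with subgroup separability of free groups.
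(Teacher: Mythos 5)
Your high-level strategy (set up the profinite Bass--Serre tree, establish efficiency and a profinite form of $1$-acylindricity, then deduce conjugacy separability from the action on the profinite tree) is the same skeleton as the paper's, but the two steps you wave through with ``should follow'' are exactly where the real content lies, and as stated they do not go through for the original graph of groups $\mathcal{G}$. The missing idea is Wise's structural theorem that a finite, $1$-acylindrical graph of finitely generated free groups is \emph{virtually clean}: one first passes to a finite-index subgroup whose induced decomposition has every edge group a free factor of its vertex groups, and then recovers conjugacy separability of $\pi_1(\mathcal{G})$ from that of the subgroup via a unique-roots argument (conjugacy separability is \emph{not} inherited by finite extensions in general, so this step needs the $1$-acylindricity-based unique-roots lemma). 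Without cleanness, efficiency is not a formality: subgroup separability of the free vertex groups plus residual finiteness of $G$ gives neither that the profinite topology of $G$ induces the \emph{full} profinite topology on a vertex group nor that vertex groups are closed in $G$; the paper's proof extends an arbitrary finite-index subgroup of a vertex group to one of $G$ by exploiting that edge groups are free factors (Marshall Hall's theorem plus characteristic cores, producing a quotient graph of \emph{finite} groups with injective attaching maps), and Wise's own finite quotients are really quotients of the clean finite-index subgroup, so there is nothing to borrow for $\mathcal{G}$ itself.

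Similarly, your profinite acylindricity step is under-justified: the statement needed is that if $\overline{\mathcal{G}_e}\cap\gamma\overline{\mathcal{G}_f}\gamma^{-1}\neq 1$ in $\widehat{\mathcal{G}_v}$ for some $\gamma\in\widehat{\mathcal{G}_v}$, then $\mathcal{G}_e=\mathcal{G}_f$ and $\gamma\in\overline{\mathcal{G}_e}$, and this does not follow from the Ribes--Zalesskii theorem that products of finitely generated subgroups of a free group are closed; the paper proves it by comparing minimal invariant subtrees in the profinite Cayley tree of the vertex group, again using the free-factor (clean) structure. Finally, your hyperbolic/elliptic case analysis and the ``descent'' from $\widehat{T}$ to $T$ --- the step you yourself flag as hardest --- is precisely the content of the conjugacy separability criterion of Wilton--Zalesskii that the paper invokes (Theorem 5.2 of \cite{WtZ}); to apply it one must also verify that edge groups are conjugacy distinguished in vertex groups and that $\overline{\mathcal{G}_e}\cap\overline{\mathcal{G}_f}=\widehat{\mathcal{G}_e\cap\mathcal{G}_f}$, conditions your sketch does not identify, and your outline offers no actual argument for the descent itself. (A smaller omission: the vertex groups need not be finitely generated, and one first splits off a free factor and uses Stebe's theorem on free products to reduce to that case.)
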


If $\pi_1(\mathcal{G})$ is finitely generated then it is word-hyperbolic by the Combination Theorem \cite{bestvina_combination_1992}. The extra techniques we use to deduce conjugacy separability are quite different from those of \cite{Wise2}.  Rather, we appeal to the technology of group actions on profinite trees, as developed in \cite{zalesskii_fundamental_1989}.

Wise was able to apply \cite{Wise2} to a wide variety of interesting classes of groups \cite{Wise1,Wise3}, and Theorem \ref{t: Main Theorem} can be applied in many of these situations.  A little care is needed here: residual finiteness passes to finite extensions, but this is not necessarily the case for conjugacy separability.  Nevertheless, under certain mild hypotheses we are able to sidestep this technicality (see Lemma \ref{separabilitypasses}).  Here follows a list of some applications of Theorem \ref{t: Main Theorem}.

The Rips construction is a powerful technique for constructing pathological hyperbolic groups \cite{Rips}.  Given any finitely presented group as input, the Rips construction outputs a hyperbolic group; pathologies of the input group often translate to new pathologies of the output group.  In \cite{Wise1}, Wise provided a version of the Rips construction in which the output group is residually finite.  It follows from Theorem \ref{t: Main Theorem} that the output of Wise's construction is actually conjugacy separable.

\begin{thmA}\label{t: Rips construction}
There exists an algorithm that, given a finitely presented group $Q$ as input, outputs a short exact sequence $1\rightarrow N\rightarrow \Gamma\rightarrow Q \rightarrow 1$ such that:
\begin{enumerate}
\item $N$ is finitely generated; and
\item $\Gamma$ is hyperbolic, torsion-free, and conjugacy separable.
\end{enumerate}
\end{thmA}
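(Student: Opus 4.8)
The plan is to take Wise's residually finite version of the Rips construction from \cite{Wise1} essentially off the shelf and to observe that its output already satisfies the hypotheses of Theorem~\ref{t: Main Theorem}, so that conjugacy separability comes for free. Recall that, on input a finite presentation of $Q$, Wise's algorithm returns a short exact sequence $1\to N\to\Gamma\to Q\to 1$ in which $N$ is finitely generated and $\Gamma$ is word-hyperbolic and torsion-free. The structural feature we would extract, either by quoting \cite{Wise1} directly or by reading it off the combinatorics of the construction, is that $\Gamma$ is in fact the fundamental group of a finite, \emph{$1$-acylindrical} graph of free groups $\mathcal{G}$ with finitely generated edge groups.

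Granting that fact, the rest is a short assembly. The group $N$ is finitely generated, which is exactly part~(1). Since $Q$ is finitely presented and $N$ is finitely generated, $\Gamma$ is finitely generated, hence word-hyperbolic by the Combination Theorem~\cite{bestvina_combination_1992}; and as the fundamental group of a graph of groups with free (hence torsion-free) vertex groups, $\Gamma$ is torsion-free, because any torsion element of $\pi_1(\mathcal{G})$ is conjugate into a vertex group. Finally, Theorem~\ref{t: Main Theorem} applies directly to $\mathcal{G}$ and yields that $\Gamma=\pi_1(\mathcal{G})$ is conjugacy separable, completing part~(2). The asserted algorithm is then just Wise's algorithm followed by this bookkeeping.

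The genuine content here is Theorem~\ref{t: Main Theorem} itself; the only delicate point in the reduction is the structural claim that Wise's output can be organised as a $1$-acylindrical graph of free groups with finitely generated edge groups, rather than merely a $k$-acylindrical one for some small $k$. If the construction as written only supplies such a splitting on a finite-index subgroup $\Gamma_0\leq\Gamma$, then one should instead apply Theorem~\ref{t: Main Theorem} to $\Gamma_0$ and pass back to $\Gamma$ via Lemma~\ref{separabilitypasses}, whose hypotheses are straightforward to verify here because $\Gamma$ is torsion-free and hyperbolic, so that centralisers of nontrivial elements are infinite cyclic and maximal cyclic subgroups are malnormal. Everything else---finite generation of $N$, hyperbolicity, torsion-freeness---is automatic from the graph-of-free-groups structure together with the hypothesis that $Q$ is finitely presented.
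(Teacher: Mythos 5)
Your proposal is correct and follows essentially the same route as the paper: run Wise's construction from \cite{Wise1} and feed its output directly into Theorem~\ref{t: Main Theorem}. The only difference is one of detail---where you defer the structural claim to citation, the paper verifies it explicitly by cyclically permuting the relators of Wise's presentation to exhibit $\Gamma$ as an HNN extension of a finitely generated free group whose free associated subgroups are malnormal (by small cancellation and \cite{Wise3}), i.e.\ a $1$-acylindrical splitting of $\Gamma$ itself, so your fallback via Lemma~\ref{separabilitypasses} is not needed.
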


Our original motivation for proving Theorem \ref{t: Main Theorem} is the following extension of the results of \cite{BridGrun}.  It provides a counterexample to the na\"{i}ve conjecture that conjugacy separability is a property of the profinite completion for residually finite, finitely presented groups.

\begin{thmA}\label{t: Pair}
There exists a finitely presented, conjugacy separable group $G$ with a finitely presented, residually finite subgroup $H$ such that the inclusion $H\hookrightarrow G$ induces an isomorphism on profinite completions but the conjugacy problem is unsolvable in $H$.
\end{thmA}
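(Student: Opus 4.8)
The plan is to run the construction of Bridson and Grunewald \cite{BridGrun}, using Theorem~\ref{t: Rips construction} in place of Wise's residually finite version of the Rips construction so as to gain conjugacy separability. Begin with a finitely presented group $Q$ that is of type $F_3$, superperfect, torsion-free, has no nontrivial finite quotients, and has unsolvable word problem; such a $Q$ is supplied in \cite{BridGrun}. Apply Theorem~\ref{t: Rips construction} to $Q$, obtaining a short exact sequence $1\to N\to\Gamma\stackrel{p}{\to}Q\to 1$ with $N$ finitely generated and $\Gamma$ hyperbolic, torsion-free, finitely presented and conjugacy separable. Set $G=\Gamma\times\Gamma$ and let $H=P=\{(g_1,g_2)\in\Gamma\times\Gamma:p(g_1)=p(g_2)\}$ be the fibre product.

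Next I would verify the four assertions. That $G$ is finitely presented is immediate. For conjugacy separability of $G$: two elements $(g_1,g_2)$, $(h_1,h_2)$ of $\Gamma\times\Gamma$ are conjugate if and only if $g_i$ is conjugate to $h_i$ in $\Gamma$ for each $i$, so if they are not conjugate then a finite quotient of one of the two factors already witnesses this; hence a direct product of conjugacy separable groups is conjugacy separable, and $G$ is conjugacy separable because $\Gamma$ is. That $H$ is residually finite is clear, being a subgroup of the residually finite group $\Gamma\times\Gamma$; that $H$ is finitely presented follows from the 1-2-3 theorem of Baumslag, Bridson, Miller and Short (as applied in \cite{BridGrun}), since $N$ is finitely generated, $\Gamma$ is finitely presented, and $Q$ is of type $F_3$. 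That the inclusion $H\hookrightarrow G$ induces an isomorphism of profinite completions is the crux of the Bridson--Grunewald argument: surjectivity is elementary, since every finite-index subgroup of $\Gamma\times\Gamma$ contains some $V\times V$ with $V\leq\Gamma$ of finite index, and as $Q$ has no proper finite-index subgroups the image $p(V)$ is all of $Q$, which readily forces $P\cdot(V\times V)=\Gamma\times\Gamma$; injectivity is \cite{BridGrun}, where superperfectness of $Q$ is used to annihilate the obstruction living in $H_2(Q)$.

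The one point not already contained in \cite{BridGrun} is that the conjugacy problem in $H=P$ is unsolvable, and here I would reduce the word problem in $Q$ to it. Fix a nontrivial element $n\in N$, so $p(n)=1$, and choose preimages $\tilde x\in\Gamma$ of the generators $x$ of $Q$; then any word $w$ in the generators of $Q$ lifts, by substituting $\tilde x$ for $x$, to a word $\tilde w$ in the generators of $\Gamma$ with $p(\tilde w)$ equal to the element of $Q$ represented by $w$. The elements $(\tilde w\, n\, \tilde w\inv,\,n)$ and $(n,n)$ both lie in $P$, and a short computation from the definition of $P$ shows that they are conjugate in $P$ if and only if $p(\tilde w)\in p(C_\Gamma(n))$. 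Since $\Gamma$ is torsion-free hyperbolic, $C_\Gamma(n)$ is infinite cyclic, say $C_\Gamma(n)=\langle z\rangle$ with $n=z^m$; then $p(z)^m=p(n)=1$, so $p(z)=1$ because $Q$ is torsion-free, whence $p(C_\Gamma(n))=1$. Therefore $(\tilde w\, n\, \tilde w\inv,\,n)$ and $(n,n)$ are conjugate in $P$ precisely when $w=1$ in $Q$; as these two elements are computable from $w$ as words in an explicit generating set of $P$, a solution to the conjugacy problem in $P$ would solve the word problem in $Q$, which is impossible.

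I do not expect a genuine obstacle in assembling Theorem~\ref{t: Pair}: the real work lies in Theorem~\ref{t: Main Theorem} (which furnishes conjugacy separability of $\Gamma$, hence of $G$) and in \cite{BridGrun} (which furnishes $Q$, the finite presentability of $P$, and the profinite isomorphism). The only subtleties are that conjugacy separability, unlike residual finiteness, is not inherited by finite extensions or by arbitrary overgroups, so it must be checked for $\Gamma\times\Gamma$ directly as above; and that one should arrange $Q$ to be torsion-free, since otherwise $p(C_\Gamma(n))$ could be a nontrivial finite cyclic subgroup and the reduction of the previous paragraph would be incomplete.
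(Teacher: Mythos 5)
Your proposal is correct and follows essentially the same route as the paper: apply Theorem~\ref{t: Rips construction} to a Bridson--Grunewald-type group $Q$ (superperfect, no finite quotients, unsolvable word problem, finite classifying space), take $G=\Gamma\times\Gamma$ and $H=P$ the fibre product, and quote \cite{BridGrun} for finite presentability of $P$ and the isomorphism of profinite completions. The only divergence is that the paper simply cites Bridson \cite{Bridson} for the unsolvability of the conjugacy problem in $P$, whereas you reprove it by reducing the word problem of $Q$ to conjugacy of $(\tilde w n\tilde w^{-1},n)$ and $(n,n)$ in $P$; your reduction is sound (and your extra hypothesis that $Q$ be torsion-free is automatic, since the $Q$ used has a finite classifying space), so this just makes one cited ingredient self-contained.
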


Note that a finitely presented, conjugacy separable group has solvable conjugacy problem, just as residual finiteness implies a solution to the word problem.  Therefore, the subgroup $H$ is not conjugacy separable.

\smallskip

Our next application concerns certain one-relator groups.  Words in elements of $S \sqcup S^{-1}$ for a set $S$ can be regarded as elements of the free group $F$ on $S$. Such a word $w$ is \emph{positive} if it is in fact a word in elements of $S$. We refer the reader to page 240 of \cite{lyndon_combinatorial_1977} for the definition of the small-cancellation condition $C'(1/6)$.  The $C'(1/6)$ condition on $w$ is generic, in a suitable sense, among all positive words (\cite{Wise3}, Theorem 6.1).  Thus the following theorem implies that a generic positive one-relator group is conjugacy separable.

\begin{thmA}\label{t: One-relator groups}
If $w\in F$ is a positive $C'(1/6)$ element that is not a proper power then the one-relator group $F/\langle\langle w\rangle\rangle$ is conjugacy separable.
\end{thmA}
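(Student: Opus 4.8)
\emph{Proof plan.} The plan is to deduce Theorem \ref{t: One-relator groups} from Theorem \ref{t: Main Theorem} by placing $G := F/\langle\langle w\rangle\rangle$ into the setting of the latter, and then using Lemma \ref{separabilitypasses} to come back down. First I would clear away the degenerate cases. If $w$ involves only one element of $S$ then, since $w$ is not a proper power, $G$ is free, and free groups are conjugacy separable; so assume that $w$ is a reduced positive word in at least two generators. Writing $S_0$ for the finite set of generators actually occurring in $w$, we have $G \cong \bigl(F(S_0)/\langle\langle w\rangle\rangle\bigr)\ast F(S\setminus S_0)$, and since conjugacy separability is inherited by free products we may assume that $S=S_0$ is finite. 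Finally I record the classical fact that, $w$ not being a proper power, $G$ is torsion-free; this is the feature of the hypothesis that Lemma \ref{separabilitypasses} will exploit.

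The core of the argument is that a positive $C'(1/6)$ one-relator group is, virtually, of the kind governed by Theorem \ref{t: Main Theorem}. Wise's analysis of positive one-relator groups (\cite{Wise3}; see also \cite{Wise1,Wise2}) shows that $G$ has a finite-index subgroup $G_0$ — arising from a finite cyclic cover of the presentation $2$-complex of $G$, for instance the kernel of the homomorphism $G\to\mathbb{Z}/|w|\mathbb{Z}$ sending every generator to $1$ — which carries the structure of a finite graph of free groups with finitely generated (in fact cyclic) edge groups. I will not reproduce the combinatorial decomposition: the point is that this is exactly the structure that \cite{Wise2} uses to establish residual finiteness, and the $C'(1/6)$ hypothesis on $w$ is precisely what forces the splitting to be $1$-acylindrical, since a nontrivial element stabilising a long arc of the Bass--Serre tree would correspond to a long piece of $w$. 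That $w$ is not a proper power ensures that the edge maps are injective with the expected images. Consequently Theorem \ref{t: Main Theorem} applies, and $G_0$ is conjugacy separable.

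It then remains to pass from $G_0$ to $G$, which requires a genuine argument, since conjugacy separability is not in general inherited from a finite-index subgroup. Here I would appeal to Lemma \ref{separabilitypasses} — the ``mild hypotheses'' flagged in the introduction. In our situation $G$ is torsion-free, and $G_0$ is not merely conjugacy separable but hereditarily so: every finite-index subgroup of $G_0$ acts on the same Bass--Serre tree, cocompactly and $1$-acylindrically, with finitely generated free vertex and edge stabilisers, and hence is again the fundamental group of a finite $1$-acylindrical graph of free groups with finitely generated edge groups, so is conjugacy separable by Theorem \ref{t: Main Theorem}. These are the conditions I expect the lemma to require, and it follows that $G$ is conjugacy separable.

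The step I expect to be the main obstacle is the middle one: matching Wise's combinatorial decomposition of the cyclic cover to the exact hypotheses of Theorem \ref{t: Main Theorem}, and in particular verifying $1$-acylindricity. The $C'(1/6)$ condition bounds the overlaps between translates of $w$, which controls the stabilisers of edges and of short paths; but converting this into the assertion that \emph{no} nontrivial element fixes an arbitrarily long path in the Bass--Serre tree requires a careful dictionary between the small-cancellation geometry of $w$ and the combinatorics of the splitting. By comparison the reductions in the first paragraph are routine, and the final descent, once Lemma \ref{separabilitypasses} is in hand, is immediate.
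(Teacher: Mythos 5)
Your overall route is the paper's route: reduce to finitely generated $F$ via the free-product decomposition (the paper cites \cite{Stebe} for conjugacy separability of free products), invoke Wise (\cite{Wise3}, Theorem 1.1) for the existence of a finite-index subgroup $\Gamma'$ that splits as a finite, $1$-acylindrical graph of free groups with finitely generated edge groups, apply Theorem \ref{t: Main Theorem} to $\Gamma'$, and then climb back up by Lemma \ref{separabilitypasses}. Note that the step you single out as the main obstacle --- building the decomposition of the cyclic cover and verifying $1$-acylindricity --- is exactly the content of Wise's cited theorem, so it is a black box here rather than something to re-derive (a small inaccuracy: the edge groups are finitely generated free, not in general cyclic, but only finite generation is needed).

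The genuine gap is in your final descent. Lemma \ref{separabilitypasses} does not take as hypotheses ``$G$ torsion-free and the finite-index subgroup hereditarily conjugacy separable''; it requires $G$ to be finitely generated with \emph{unique roots}. Your substitute hypotheses are not shown to suffice anywhere in the paper, and they cannot simply be waved through: promoting conjugacy separability from a finite-index subgroup is precisely the delicate point (cf.\ \cite{goryaga_example_1986}), torsion-freeness alone does not imply unique roots, and even the Minasyan-style criteria \cite{minasyan_hereditary_2009} that exploit hereditary conjugacy separability of a finite-index subgroup need additional centralizer conditions rather than just torsion-freeness. So as written the last step does not follow. The repair is immediate and is what the paper does: since $w$ satisfies $C'(1/6)$ the group $\Gamma=F/\langle\langle w\rangle\rangle$ is hyperbolic, and since $w$ is not a proper power $\Gamma$ is torsion-free; hence $\Gamma$ has unique roots by Lemma \ref{l: hyperbolic unique roots}, and Lemma \ref{separabilitypasses} applies to the conjugacy separable finite-index subgroup $\Gamma'$. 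Your observation that $\Gamma'$ is hereditarily conjugacy separable (every finite-index subgroup inherits a $1$-acylindrical clean-able splitting) is correct but plays no role once unique roots are in hand.
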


Here is a brief outline of the structure of this paper.  Definitions and preliminary material are introduced in Section \ref{s: Background}, and in Section \ref{s: Reduction to clean} we reduce to the clean case. The proof of Theorem \ref{t: Main Theorem} in the clean case is contained in Section \ref{s: Proof of clean}.  Theorems \ref{t: Rips construction}, \ref{t: Pair} and \ref{t: One-relator groups} are deduced in Section \ref{s: Applications}.

\subsection*{Acknowledgements}The authors would like to thank Luis Ribes for making available the manuscript of his unpublished book with Zalesskii, \emph{Profinite Trees}.  Thanks also to Martin Bridson, who initiated this work by asking whether Theorem \ref{t: Rips construction} was true.

\section{Background}\label{s: Background}

\subsection*{Notation}

We adopt the convention that if $x,y$ are group elements then $x^{y}$ denotes $yxy^{-1}$. Likewise if $A$ is a subset then $A^{y}$ denotes $yAy^{-1}$.
\smallskip

Recall that the \emph{normal core} of a subgroup $H$ in a group $G$ is the largest normal subgroup of $G$ contained in $H$. Equivalently it may be considered as the intersection of all of the conjugates of $H$.  If $H$ is of finite index then so is the normal core of $H$.

In a similar vein, given a subgroup $H$ of index $n<\infty$ in a group $G$, we define the \emph{characteristic core} of $H$ to be the intersection of all subgroups of $G$ of index $n$. Any automorphism of $G$ will preserve the set of subgroups of index $n$, and hence this characteristic core. Thus it is a characteristic subgroup (\emph{i.e.} one invariant under all automorphisms). Observe that when $G$ is finitely generated it has only finitely many subgroups of index $n$, so the characteristic core is of finite index. Of course this definition is not really dependent on the subgroup $H$, but only on its index in $G$. Nonetheless we use it as a mechanism for finding a finite-index characteristic subgroup of the ambient group inside a given subgroup.

\subsection{Separability conditions and profinite completions}
Let $G$ be a group. There are a number of properties of the group which relate to, or are directly detectable in, its finite quotients. We give here a brief discussion of some of these properties, insofar as they relate to our work. Where multiple equivalent definitions are given without proof, their equivalence is a standard result, and the different versions are stated to help lend the reader perspective.

\begin{definition}
One says $G$ is \emph{residually finite} if given $g\in G\smallsetminus 1$ there exists a finite quotient $\phi(G)$ with $\phi(g) \neq 1$.
\end{definition}

This is equivalent to the statement that the intersection of all the finite-index normal subgroups of $G$ is trivial, for such subgroups not containing $g$ correspond precisely to quotients of the above type.

\begin{definition}
A subset $S$ of $G$ is \emph{separable} if given $g \in G \smallsetminus S$ there exists a finite quotient $\phi(G)$ with $\phi(g)\notin \phi(S)$.
\end{definition}

Being residually finite is thus the same as having the trivial subgroup separable.

\begin{definition}
One says $G$ is \emph{conjugacy separable} if every conjugacy class of $G$ is separable.
\end{definition}

Equivalently for any two non-conjugate elements of $G$ there is some finite quotient of $G$ where their images remain non-conjugate. This is a stronger property than being residually finite.

\begin{definition}
A subgroup $H$ of $G$ is \emph{conjugacy distinguished} if, for any $g\in G$ which is not conjugate into $H$, there exists a finite quotient $\phi(G)$ in which $\phi(g)$ is not conjugate into $\phi(H)$.
\end{definition}

Being conjugacy distinguished is a stronger property for a subgroup than being separable.

There is a natural topology on $G$ that puts these definitions into context.

\begin{definition}
The \emph{profinite topology} on $G$ is the coarsest topology making every homomorphism from $G$ to a finite group (with the discrete topology) continuous.
\end{definition}

Many of the previous definitions can be restated in this language. For instance, being separable is equivalent to being closed in this topology. Observe that a basis for the profinite topology is formed by the preimages of singleton elements of the finite quotients; these are precisely the cosets of the finite-index normal subgroups of $G$.

We can frequently see these properties in the profinite completion of a group.

\begin{definition}
Let $G$ be a group. Consider the set of finite quotients $\phi:G \rightarrow A$; these naturally form an inverse system (if $\ker(\phi_i) \subset \ker(\phi_j)$ then $\phi_i(G)\twoheadrightarrow \phi_j(G)$). The \emph{profinite completion} $\hat{G}$ of $G$ is the inverse limit of this system.
\end{definition}

There is a natural homomorphism $G \rightarrow \hat{G}$, the kernel of which is the intersection of all finite-index normal subgroups of $G$. Thus this natural map is an injection precisely when $G$ is residually finite. In the cases of interest to us, $G$ will be residually finite and so inject into its profinite completion; we sometimes identify $G$ with its image under this injection.

For a residually finite group $G$, we may state the criterion for conjugacy separability in this language: if two elements of $G$ are conjugage in $\hat{G}$ then they are already conjugate in $G$.

\begin{lemma}For a residually finite group $G$ with finite-index subgroup $H$, $\hat{G}=G\hat{H}$.
\end{lemma}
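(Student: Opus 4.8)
The plan is to prove the equality by identifying $\hat{H}$ with the closure $\overline{H}$ of $H$ inside $\hat{G}$, and then showing that $G\overline{H}$ is simultaneously closed and dense in $\hat{G}$, and hence all of it. First I would recall that the inclusion $H\hookrightarrow G$ induces a continuous homomorphism $\hat{H}\to\hat{G}$ whose image is exactly $\overline{H}$: the image is compact (continuous image of a profinite group), it contains $H$, and it is contained in any closed set containing $H$. Moreover this map is injective, and here is where finite index is genuinely used: if $K\leq H$ has finite index in $H$, then $[G:K]=[G:H][H:K]<\infty$, so $K$ is open in the profinite topology of $G$; thus the profinite topology that $G$ induces on $H$ is already the full profinite topology of $H$, and no nontrivial element of $\hat{H}$ is killed in $\hat{G}$. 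From this point on I may therefore regard $\hat{H}=\overline{H}$ as a (closed) subgroup of $\hat{G}$.

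Next I would fix coset representatives $g_1,\dots,g_n$ so that $G=\bigsqcup_{i=1}^{n}g_iH$, where $n=[G:H]$. Since $H\subseteq\overline{H}$, absorbing $H$ gives $G\overline{H}=\bigcup_{i=1}^{n}g_i\overline{H}$, a finite union of left translates of the closed set $\overline{H}$. As left translation by any element of $\hat{G}$ is a homeomorphism, each $g_i\overline{H}$ is closed, so $G\overline{H}$ is closed in $\hat{G}$. Finally, $G$ is dense in $\hat{G}$ by construction of the profinite completion (the maps to the finite quotients are surjective, and for residually finite $G$ the map $G\to\hat{G}$ is an injection with dense image). Since $G\subseteq G\overline{H}$ and $G\overline{H}$ is closed, it must equal $\hat{G}$, giving $\hat{G}=G\hat{H}$ as claimed.

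I do not expect any serious obstacle: the argument is the standard ``a closed set containing a dense set is everything'' principle, applied to a finite union of cosets. The only step requiring real care — and the only place the hypothesis $[G:H]<\infty$ is essential — is the identification $\hat{H}\cong\overline{H}$; without finite index the natural map $\hat{H}\to\hat{G}$ need be neither injective nor have the expected image, and the statement would fail. (This identification is standard and could alternatively just be cited from Ribes--Zalesskii.)
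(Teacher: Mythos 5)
Your argument is correct, but it reaches the conclusion by a different mechanism than the paper. The paper works directly with the inverse system: it restricts to the cofinal family of quotients $G/I$ with $I$ normal in $G$ and contained in $H$, picks $d\in G$ agreeing with a given $\gamma\in\hat G$ in the single smallest such quotient $G/N$ (where $N$ is the normal core of $H$ in $G$), and then uses consistency of $\gamma$ across the system to see that $d^{-1}\gamma$ has image in the image of $H$ in every quotient, i.e.\ lies in $\hat H$, implicitly identified via cofinality with the closure $\overline{H}$ of $H$ in $\hat G$. You make that identification $\hat H\cong\overline{H}\leq\hat G$ explicit and then finish topologically: $G\overline{H}=\bigcup_{i}g_i\overline{H}$ is a finite union of translates of a closed set, hence closed, and it contains the dense subgroup $G$, hence equals $\hat G$. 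The finite-index hypothesis enters both proofs at the same essential point --- it guarantees that the subgroups $H\cap M$, for $M$ a finite-index normal subgroup of $G$, are cofinal among the finite-index subgroups of $H$, so that $\overline{H}$ really is a copy of $\hat H$ --- but your closed-plus-dense argument replaces the paper's element chase with a compactness argument and needs no distinguished quotient, while the paper's version is more concrete, exhibiting the one finite quotient $G/N$ from which the coset representative $d$ is read off. One minor remark: the image of $\hat H\to\hat G$ equals $\overline{H}$ for an arbitrary subgroup $H$ (it is a compact, hence closed, set containing a dense copy of $H$); finite index is needed only for the injectivity of this map and, of course, for the coset decomposition that makes the lemma true.
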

\begin{proof}
An element of $\hat{G}$ is a consistent choice of elements in the inverse system of finite quotients of $G$. The family of quotients given by $\{I: I\unlhd H \& I\unlhd G\}$ is cofinal in this system (and also in the corresponding system for $\hat{H}$), so we may instead just consider these. Note that there is a smallest such quotient, $\phi:G\rightarrow G/N$, where $N$ is the normal core of $H$ in $G$. Given $\gamma\in\hat{G}$, we can find $d\in G$ such that $\phi(d)=\phi(\gamma)$. Then $\phi(d^{-1}\gamma)=1$, so the consistency forces that in every quotient the image of $d^{-1}\gamma$ lies in the image of $H$, which is to say that $d^{-1}\gamma\in\hat{H}$. Therefore $\gamma=d\gamma'$, for some $\gamma'\in\hat{H}$.
\end{proof}

\subsection{Graphs of groups and profinite graphs of groups}

We assume that the reader has some familiarity with the theory of groups acting on trees and in particular \emph{graphs of groups} in the sense described by Serre in \cite{Serre}. When we say that $\mathcal{G}$ is a graph of groups we mean that it contains the data of the underlying graph, the edge groups, the vertex groups and the attaching maps.  We denote the set of vertices of the underlying graph by $V(\mathcal{G})$ and likewise the set of edges by $E(\mathcal{G})$.  The vertex group corresponding to $v\in V(\mathcal{G})$ is denoted $\mathcal{G}_v$, and similarly $\mathcal{G}_e$ is the edge group for an edge $e\in E(\mathcal{G})$.  We will often abuse notation and identify $\mathcal{G}_e$ with its image under an attaching map.

The fundamental group of the graph of groups, $G=\pi_{1} (\mathcal{G})$, acts on the associated Bass--Serre tree $T$, which action recovers the underlying graph of $\mathcal{G}$ as its quotient. For a vertex $v$ and edge $e$ in $T$ we denote the corresponding vertex and edge stabilizers by $G_v$ and $G_e$ respectively.

\begin{definition}
A graph of groups $\mathcal{G}$ is \emph{clean} if each vertex group is a finitely generated free group, and each image of each edge group under an attaching map is a free factor in the relevant vertex group.
\end{definition}

Note that our definition of clean is not quite the same as the definition in \cite{Wise2}.  Wise does not require the vertex groups to be finitely generated.

\begin{definition}
A graph of groups $\mathcal{G}$  with fundamental group $G$ is \emph{$k$-acylindrical} if given any path of ($k+1$) edges $e_{0}e_{1} \ldots e_{k}$ in the associated tree $T$, the intersection $\bigcap_{i=0}^{k}G_{e_{i}}$ of the stabilizers is trivial.
\end{definition}
We will generally be interested in the case $k=1$. This means that the intersection of any two adjacent edge stabilizers is trivial. Equivalently given any vertex group $\mathcal{G}_{v}$ the set $E$ of edge groups in the vertex group is \emph{malnormal}, \emph{i.e.} if $A,B\in E, g \in \mathcal{G}_{v}$, then either $A\cap B^{g}$ is trivial or $A=B$ and $g \in A$.

\begin{definition}
A graph of groups $\mathcal{G}$ has \emph{efficient profinite topology} if $G=\pi_{1} (\mathcal{G})$ is residually finite, the profinite topology on $G$ induces the profinite topology on each vertex and each edge group (viewed as subgroups of $G$), and each vertex and each edge group is separable in $G$.
\end{definition}

There is a similar notion of profinite groups acting on profinite trees. We recall some basic definitions and facts.  For details, see \cite{profinitetrees} or \cite{zalesskii_fundamental_1989}. Recall that a \emph{profinite tree} $T$ is an inverse limit of connected finite graphs such that $H_{1}(T, \hat{\mathbb{Z}})=0$. If a profinite group $G$ acts continuously on $T$ with finite quotient $\Gamma$ then there is a profinite graph of groups $\mathcal{G}$ over $\Gamma$ where the vertex and edge groups are isomorphic to the stabilizers of the corresponding vertices and edges in $T$, and the profinite fundamental group $\Pi_1(\mathcal{G})$ is isomorphic to $G$. Similarly, associated to a finite graph of profinite groups is a profinite tree on which the fundamental group acts. As in the general (non-profinite) case, each subgroup has a minimal invariant subtree (see Lemma 2.2 of \cite{RZ}).  When $\mathcal{G}$ is a finite graph of residually finite groups, we can consider the graph of profinite groups $\hat{\mathcal{G}}$ whose edge and vertex groups are the profinite completions of those of $\mathcal{G}$. In this case $\widehat{\pi_1(\mathcal{G})}$ is isomorphic to $\Pi_1(\hat{\mathcal{G}})$.

\begin{definition}
A graph of groups $\mathcal{G}$  is \emph{profinitely $k$-acylindrical} if given any path of ($k+1$) edges $e_{0}e_{1} \ldots e_{k}$ in the profinite tree $\hat{T}$ associated with $\hat{\mathcal{G}}$, the intersection $\bigcap_{i=0}^{k}\hat{G}_{e_{i}}$ of the stabilizers is trivial, where $\hat{G}=\Pi_1(\hat{\mathcal{G}})$.
\end{definition}

\section{Reduction to the clean case}\label{s: Reduction to clean}

As remarked in the introduction, conjugacy separability is not always a commensurability invariant \cite{goryaga_example_1986}.  The next lemma provides a condition under which we can deduce that conjugacy separability of a finite-index subgroup implies conjugacy separability of the whole group.  We will say that a group $G$ has \emph{unique roots} if, whenever $a^n=b^n$ for $a,b\in G$ and $n\in\Z\smallsetminus 0$, it follows that $a=b$.

\begin{lemma}\label{separabilitypasses}
If $G$ is a finitely generated group with unique roots and $G$ has a finite-index subgroup $H$ which is conjugacy separable, then $G$ is conjugacy separable.
\end{lemma}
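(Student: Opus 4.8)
The plan is to reduce the conjugacy separability of $G$ to that of $H$ by passing through a finite-index \emph{normal} subgroup. First I would replace $H$ by its normal core $N$ in $G$; since $G$ is finitely generated, $N$ is of finite index and normal, and conjugacy separability is inherited by finite-index subgroups (this direction is standard and harmless), so $N$ is conjugacy separable. Now suppose $g_1, g_2 \in G$ are conjugate in $\hat{G}$; using the criterion for conjugacy separability of the residually finite group $G$ stated in the excerpt, I need to show they are already conjugate in $G$.

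The key structural tool is the lemma $\hat{G} = G\hat{N}$, together with the fact that, because $N$ is normal of finite index, $\hat{N}$ is normal of finite index in $\hat{G}$ and $\hat{G}/\hat{N} \cong G/N =: Q$. Suppose $g_2 = g_1^{\gamma}$ for some $\gamma \in \hat{G}$. Projecting to $Q$ shows $g_1$ and $g_2$ have the same image in $Q$; equivalently $g_1 g_2^{-1} \in N$ (after first using that $g_1, g_2$ already have conjugate, hence — by finiteness of $Q$ and a suitable further quotient — equal images in $Q$; more carefully, one multiplies $g_2$ by an element of $G$ realizing the coset adjustment so that WLOG $g_1, g_2$ lie in the same coset of $N$). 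Write $\gamma = c\nu$ with $c \in G$ and $\nu \in \hat{N}$. Then $g_1^c$ and $g_2$ both lie in a single coset of $N$, and $(g_1^c)^{\nu} = g_2$ with $\nu \in \hat{N}$. The aim is now to promote this to a genuine conjugacy in $N$: I want to show $g_1^c$ and $g_2$ are conjugate in $\widehat{N}$ by an element of $\hat{N}$ and hence — by conjugacy separability of $N$ applied to the two elements $g_1^c, g_2 \in N$ — conjugate in $N$ itself, whence $g_1, g_2$ are conjugate in $G$.

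The one genuine obstacle is the passage from ``conjugate in $\hat{G}$'' to ``conjugate in $\hat{N}$ by an element of $\hat{N}$'': a priori $\nu$ conjugates $g_1^c$ to $g_2$, but I need these elements to be conjugate \emph{inside} $N$, and $g_1^c, g_2$ need not lie in $N$ — they lie in some fixed coset $xN$. This is exactly where the unique roots hypothesis enters. The trick is to pass to powers: since $[G:N] = |Q| = m$, the elements $g_1^m$ and $g_2^m$ (more precisely $(g_1^c)^m$ and $g_2^m$) do lie in $N$, they are conjugate in $\hat{N}$ via $\nu$, hence conjugate in $N$ by conjugacy separability of $N$ — say $(g_1^c)^{cm}\!\!\phantom{}^{\!\!\!\delta} = g_2^m$ for some $\delta \in N \subseteq G$. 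Replacing $c$ by $c\delta$ we may assume $(g_1^c)^m = g_2^m$ in $G$. Now unique roots in $G$ gives $g_1^c = g_2$ directly, so $g_1$ and $g_2$ are conjugate in $G$, as required.

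Thus the skeleton is: (1) reduce to a normal finite-index conjugacy separable subgroup $N$ via the normal core; (2) use $\hat{G} = G\hat{N}$ and finiteness of $Q = G/N$ to arrange, after conjugating by an element of $G$, that the conjugator lies in $\hat{N}$ and the $m$-th powers of the two elements lie in $N$; (3) apply conjugacy separability of $N$ to those $m$-th powers to get an honest conjugacy in $N \subseteq G$ between $g_1^{c'}$ raised to the $m$ and $g_2^m$; (4) invoke unique roots to strip the exponent $m$ and conclude. The only subtlety beyond bookkeeping is making step (2) precise — ensuring that after the initial adjustment the relevant elements genuinely sit in $N$ so that $N$'s conjugacy separability is applicable — but this is forced by passing to $m$-th powers, and the finite generation of $G$ is used only to guarantee $N$ has finite index.
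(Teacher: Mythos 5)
Your core argument (steps (2)--(4)) is essentially the paper's: split the conjugator as an element of $G$ times an element of the completion of the finite-index subgroup, pass to $m$-th (or $m!$-th) powers so that both elements land in that subgroup, apply its conjugacy separability, and then strip the exponent using unique roots. However, step (1) contains a genuine gap: you replace $H$ by its normal core $N$ and justify that $N$ is conjugacy separable by asserting that conjugacy separability is ``inherited by finite-index subgroups'' as a standard fact. It is not a fact at all: conjugacy separability does \emph{not} in general pass to finite-index subgroups (nor to finite-index overgroups -- that is the whole point of Lemma \ref{separabilitypasses} and of the cited example of Goryaga); there are conjugacy separable groups with non-conjugacy-separable subgroups of finite index, which is precisely why the strictly stronger notion of \emph{hereditary} conjugacy separability (as in Minasyan's paper cited here) exists. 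If $a,b\in N$ are conjugate in $\hat N$, conjugacy separability of $H$ only yields a conjugator in $H$, not in $N$, so the ``harmless'' reduction does not go through.

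The good news is that the reduction is also unnecessary. The lemma $\hat G = G\hat H$ stated just before holds for an arbitrary finite-index subgroup $H$, with no normality assumption, and likewise $g^{m!}\in H$ for every $g\in G$ whenever $[G:H]=m$, whether or not $H$ is normal. So you can run your steps (2)--(4) verbatim with $H$ in place of $N$, deleting the coset-of-$N$ bookkeeping (which is not needed anyway: you never have to arrange that the two elements themselves lie in $H$ or in a common coset, only that their $m!$-th powers lie in $H$, which is automatic). That is exactly the paper's proof. Two small further slips worth noting: ``multiplies $g_2$ by an element of $G$ realizing the coset adjustment'' should be a conjugation, not a multiplication, and with the paper's convention $x^y = yxy^{-1}$ the final replacement should be $c \mapsto \delta c$ rather than $c\delta$; neither affects the structure once the normal-core step is removed.
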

\begin{proof}
Let $g_{1},g_{2}\in G$ be conjugate in every finite quotient, \emph{i.e.} there exists $\gamma\in\hat{G}$ such that $g_{1}=g_{2}^{\gamma}$. We need to demonstrate that $g_{1}$ and $g_{2}$ are already conjugate in $G$.

Observe that as the index of $H$ in $G$ is finite (equal to $m$, say), there is some positive integer $n$ such that $g^{n}\in H$ for every $g\in G$ (we could take $n=m!$, since for every $g$, $g^{k}\in H$ for some $k\leq m$). Now $g_{1}=g_{2}^{\gamma}$, so $g_{1}^{n}=(g_{2}^{\gamma})^n=(g_{2}^{n})^{\gamma}$.

Since $\hat{G}=G\hat{H}$, we may take $\gamma=d\gamma'$, for some $d\in G,\gamma'\in\hat{H}$. We have $g_{1}^{n}=(g_{2}^{n})^{d\gamma'}=((g_{2}^{d})^{n})^{\gamma'}$. All $n^{th}$-powers of elements of $G$ are in $H$, so we have two elements of $H$ which are conjugate in $\hat{H}$. But we know that $H$ is conjugacy separable, so they are already conjugate in $H$: $g_{1}^{n}=((g_{2}^{d})^{n})^h$ for some $h\in H$. It follows that $g_{1}^{n}=(g_{2}^{dh})^n$, and, by our assumption about unique roots in $G$, that $g_{1}=g_{2}^{dh}$.
\end{proof}

It is a standard fact that torsion-free hyperbolic groups have the unique roots property.

\begin{lemma}\label{l: hyperbolic unique roots}
If $G$ is a torsion-free hyperbolic group and $a^n=b^n$ for $a,b\in G$, with $n$ a positive integer, then $a=b$.
\end{lemma}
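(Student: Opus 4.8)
The plan is to use the fact that in a torsion-free hyperbolic group, the centralizer of any nontrivial element is infinite cyclic; this is the standard structure theorem for centralizers in hyperbolic groups (every element lies in a unique maximal cyclic subgroup, and the centralizer of a nontrivial element is virtually $\Z$, hence actually $\Z$ in the torsion-free case). Granting this, suppose $a^n = b^n$ with $n$ a positive integer. If $a^n = 1$ then torsion-freeness forces $a = 1$, and then $b^n = 1$ gives $b = 1$, so we may assume $c := a^n = b^n \neq 1$. Both $a$ and $b$ commute with $c$: indeed $a c a^{-1} = a \cdot a^n \cdot a^{-1} = a^n = c$, and similarly for $b$. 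Hence $a$ and $b$ both lie in the centralizer $C_G(c)$, which is infinite cyclic, say generated by $t$.

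Now write $a = t^p$ and $b = t^q$ for integers $p, q$. The relation $a^n = b^n$ becomes $t^{np} = t^{nq}$ in the infinite cyclic group $\langle t \rangle$, so $np = nq$, and since $n$ is a positive integer (in particular nonzero) we get $p = q$, whence $a = b$. This completes the argument.

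The only real input is the centralizer structure theorem for torsion-free hyperbolic groups, and I expect that to be the main point to cite carefully rather than to prove. One can derive it from the fact that a hyperbolic group contains no copy of $\Z^2$ (so centralizers cannot contain free abelian subgroups of rank $2$) together with the classification of elements as of finite or infinite order and the quasiconvexity of cyclic subgroups; alternatively it is stated directly in standard references on hyperbolic groups. Since the lemma is labelled "a standard fact" in the text, I would simply invoke this centralizer result and present the two short paragraphs above as the proof.
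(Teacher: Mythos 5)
Your proof is correct and follows essentially the same route as the paper: pass to the centralizer of the common power $a^n=b^n\neq 1$, use that centralizers of infinite-order elements in hyperbolic groups are virtually cyclic (the paper cites Corollary 7.2 of \cite{CDP}), conclude it is infinite cyclic by torsion-freeness, and compare exponents. No gaps.
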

\begin{proof}
The case $a=1$ is trivial. Let $\gamma=a^n \neq 1$. We consider the centralizer $Z(\gamma)$. As $G$ is torsion-free, $\gamma$ is non-trivial and has infinite order. Corollary 7.2 of \cite{CDP} states that the centralizer of any element of infinite order in a hyperbolic group is virtually cyclic. So $Z(\gamma)$ is torsion free and virtually cyclic, so cyclic ($=\langle q \rangle$, say). As $a$ and $b$ lie in $Z(\gamma)$, each is a power of $q$. But $a^n=b^n$, so we have $a=b$.
\end{proof}

In fact, finitely generated 1-acylindrical graphs of free groups are hyperbolic by the Combination Theorem \cite{bestvina_combination_1992}.   However, we will give a simple, direct proof that the groups in question have unique roots using Bass--Serre theory.

\begin{lemma}\label{uniqueroots}
If $\mathcal{G}$ is a $1$-acylindrical graph of groups and every vertex group has unique roots then $G=\pi_1(\mathcal{G})$ has unique roots.
\end{lemma}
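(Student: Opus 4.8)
The plan is to use the action of $G=\pi_1(\mathcal{G})$ on the Bass--Serre tree $T$ and analyse where the relevant elements act elliptically or hyperbolically. Suppose $a^n=b^n=:\gamma$ for some $a,b\in G$ and $n\in\Z\smallsetminus 0$. If $\gamma=1$ then, since $a^n=1$ forces $a$ to be elliptic (a non-trivial element acting hyperbolically has infinite order, indeed its powers are all hyperbolic) and hence $a$ lies in a conjugate of a vertex group, which has unique roots, so $a=1$; similarly $b=1$. So assume $\gamma\neq 1$. I would split into the two cases according to whether $\gamma$ acts elliptically or hyperbolically on $T$.

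\emph{Elliptic case.} If $\gamma$ is elliptic, it fixes a vertex $v$ of $T$, so $\gamma\in G_v$, a conjugate of a vertex group. Now $a$ and $b$ are both elliptic (if $a$ were hyperbolic then $a^n$ would be hyperbolic too), so $a$ fixes some vertex and $b$ fixes some vertex. I claim $a$ and $b$ both fix $v$. Indeed, the fixed-point set of $a$ is a subtree, and $\gamma=a^n$ fixes $v$; if $v$ is not fixed by $a$, then $a$ moves $v$, but $a^n$ fixes $v$, which would mean $a$ permutes the finite orbit $\{v,av,\dots,a^{n-1}v\}$ cyclically, hence $a$ fixes the midpoint/centre of this orbit — but more simply, in a tree an element fixing a point and acting on a finite set of points it cyclically permutes must fix each of them (the "centre" of a bounded subtree is fixed, and an elliptic isometry of a tree fixing $v$ fixes the whole segment from $v$ to any point it fixes; since $a$ has a fixed vertex $w$ and $a^n$ fixes $v$, $a$ fixes the segment $[w,v]$ pointwise by considering that $\mathrm{Fix}(a)$ is a subtree and its $a$-image equals itself). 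The cleanest route: $\mathrm{Fix}(a)$ is a non-empty subtree, $a^n$ fixes $v$, and $\mathrm{Fix}(a)\subseteq\mathrm{Fix}(a^n)$, so it suffices to show $v\in\mathrm{Fix}(a)$; if not, let $w$ be the point of $\mathrm{Fix}(a)$ closest to $v$, then $a$ fixes $w$ but moves the edge of $[w,v]$ at $w$, so $a$ moves $v$ to a point $av$ with $[w,v]\cap[w,av]=\{w\}$, and then $d(v,a^iv)$ grows without bound — contradicting $a^n v=v$. Hence $a,b\in G_v$, and since $G_v$ has unique roots (it is a conjugate of a vertex group), $a=b$.

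\emph{Hyperbolic case.} If $\gamma$ acts hyperbolically, it has a unique axis $A_\gamma$ and translation length $\ell(\gamma)>0$. Both $a$ and $b$ are then hyperbolic (an elliptic element has elliptic powers), with $\ell(a)=\ell(b)=\ell(\gamma)/|n|$, and since $a^n=\gamma$, the axis of $a$ is $\gamma$-invariant, hence $A_a=A_\gamma$ (a hyperbolic isometry has a unique axis, and $A_\gamma$ is the unique line invariant under $\gamma$ that $a$ preserves); similarly $A_b=A_\gamma$. So $a$ and $b$ both preserve $A_\gamma$ and translate along it in the same direction by the same amount $\ell(\gamma)/|n|$ (same direction because $a^n$ and $b^n$ translate the same way). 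Therefore $ab^{-1}$ fixes $A_\gamma$ pointwise; in particular $ab^{-1}$ fixes a path of two adjacent edges in $T$, so by $1$-acylindricity $ab^{-1}=1$, i.e. $a=b$. Here I should be slightly careful that $A_\gamma$ really contains a path of length $2$: since $\ell(\gamma)\geq 1$, the axis is a genuine bi-infinite line in $T$ containing arbitrarily long segments, so this is fine; and $ab^{-1}$, translating $A_\gamma$ by $\ell(\gamma)/|n|-\ell(\gamma)/|n|=0$ while preserving it, is elliptic and fixes every vertex of $A_\gamma$.

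The main obstacle is the hyperbolic case, specifically the step identifying the axes of $a$, $b$ and $\gamma$ and arguing that $a$ and $b$ translate in the same direction: one must rule out the possibility that, say, $a$ and $b$ have the same axis but the "fractional translation lengths" interact with a flip or with torsion in the stabiliser of the axis. Since there is no torsion assumption, I would phrase this purely in terms of translation lengths and orientations of the line $A_\gamma$, using that $\ell\colon G\to\mathbb{R}_{\geq 0}$ is a well-defined function invariant under conjugation and that $\ell(g^n)=|n|\,\ell(g)$, together with the fact that the pointwise stabiliser of a bi-infinite line is trivial by $1$-acylindricity — this last fact is what lets me conclude $a=b$ once I know $ab^{-1}$ fixes the line pointwise, and it is also what makes the orientation bookkeeping unnecessary in the end, since any two hyperbolic elements with the same axis and the same translation length that both have $n$-th power equal to $\gamma$ must differ by an element of that (trivial) pointwise stabiliser.
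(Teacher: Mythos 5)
Your treatment of the hyperbolic case (and of the case $\gamma=1$) is correct and is essentially the paper's argument: common axis, equal translation length in the same direction, so $ab^{-1}$ fixes the axis pointwise and $1$-acylindricity forces $ab^{-1}=1$. The elliptic case, however, contains a genuine gap. The claim you rely on --- if $a$ is elliptic and $a^n$ fixes a vertex $v$, then $a$ fixes $v$ --- is false for general actions on trees, even for torsion-free groups, and your argument for it fails at the step ``then $d(v,a^iv)$ grows without bound'': since $w\in\mathrm{Fix}(a)$, every translate $a^iv$ lies at distance $d(v,w)$ from $w$, so $d(v,a^iv)\le 2\,d(v,w)$ for all $i$, and there is no contradiction with $a^nv=v$ (you are implicitly using hyperbolic-type behaviour for an elliptic isometry). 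A concrete counterexample to the claim: let $G=\langle x,t\mid tx^2t^{-1}=x^2\rangle$, the HNN extension of $\mathbb{Z}=\langle x\rangle$ over $\langle x^2\rangle$ --- a graph of groups whose vertex group has unique roots, but which is not $1$-acylindrical. Here $a=x$ is elliptic and $a^2=x^2$ fixes the vertex $v=t\cdot v_0$ of the Bass--Serre tree, yet $a$ does not fix $v$; correspondingly unique roots fails in $G$, since $(txt^{-1})^2=x^2$ while $txt^{-1}\neq x$. This shows the elliptic case is exactly where $1$-acylindricity must be used, and your elliptic argument never invokes it.

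The paper's elliptic argument, which you would need to substitute, runs as follows. One may assume $\gamma=a^n=b^n\neq 1$ (the vertex groups have unique roots, hence are torsion-free, hence so is $G$). Pick a vertex $u$ fixed by $a$ and a vertex $v$ fixed by $b$; then $\gamma$ fixes both, so by $1$-acylindricity either $u=v$ or $u,v$ are adjacent. In the latter case $\gamma$ fixes the edge $e$ between them and also fixes $ae$ (as $a$ preserves $\mathrm{Fix}(\gamma)$ and fixes the endpoint $u$); since $\mathrm{Fix}(\gamma)$ has diameter at most $1$, this forces $ae=e$, so $a$ fixes $e$ and hence $v$. Now $a$ and $b$ lie in the same vertex stabilizer, which is a conjugate of a vertex group, and unique roots there give $a=b$.
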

\begin{proof}
Let $a,b\in G$ and suppose that $a^n=b^n$ for $n>0$.  Let $T$ be the Bass--Serre tree of $\mathcal{G}$.  Then $a$ and $b$ have the same translation length on $T$, and in particular are either both hyperbolic or both elliptic.  If they are both hyperbolic then they have a common axis which they translate an equal distance in the same direction, so $a^{-1}b$ fixes the whole axis.  Because $\G$ is $1$-acylindrical, it follows that $a=b$.

Suppose $a$ and $b$ are both elliptic.  Let $u$ be a vertex stabilized by $a$ and $v$ a vertex stabilized by $b$.  The claim is that $a$ also stabilizes $v$.  Suppose therefore that $u\neq v$.  Note that $a^n=b^n$ stabilizes both $u$ and $v$.  The fundamental group of a graph of torsion-free groups is torsion free, so $a^n$ is non-trivial and therefore $u$ and $v$ are adjacent because $\mathcal{G}$ is 1-acylindrical.  So $a^n$ stabilizes the intermediate edge $e$.  But $a^n$ also stabilizes $ae$, so because the fixed point set of $a^n$ has diameter 1 it follows that $a$ stabilizes $e$ and hence $v$.

The result now follows from the hypothesis that the vertex groups of $\mathcal{G}$ have unique roots.
\end{proof}

Free groups have unique roots (for example by either of Lemmas \ref{l: hyperbolic unique roots} or \ref{uniqueroots}, or direct observation), and so the fundamental group of a 1-acylindrical graphs of free groups has unique roots.

The hypotheses of Theorem \ref{t: Main Theorem} only assume that the edge groups are finitely generated.  The next observation enables us to assume that the vertex groups are also finitely generated.

\begin{remark}\label{r: Fg vertex groups}
If $\mathcal{G}$ is a finite graph of free groups with finitely generated edge groups then each vertex group has a finitely generated free factor that contains the image of every edge map.  Replacing each vertex group with this free factor, we obtain a new finite graph of groups $\mathcal{G}'$ with finitely generated vertex groups.  Now $\pi_1(\mathcal{G})=\pi_1(\mathcal{G}')*F$ where $F$ is a (perhaps infinitely generated) free group.
\end{remark}

The main technical result of \cite{Wise2} is Theorem 11.3, which asserts that the 1-acylindrical graphs that we consider are virtually clean.  (In fact, Wise's hypotheses are slightly weaker---he assumes that his graph of groups is \emph{thin}.)

\begin{thm}[Wise]\label{t: Wise main theorem}
Let $\mathcal{G}$ be a finite, 1-acylindrical graph of finitely generated free groups with finitely generated edge groups.  Then $\pi_1(\mathcal{G})$ has a subgroup of finite index $H$ such that the induced graph-of-groups decomposition for $H$ is 1-acylindrical and clean.
\end{thm}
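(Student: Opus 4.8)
Since this statement is due to Wise, the natural course is to cite \cite{Wise2}; but here is the approach I would take to reconstruct it. First I would invoke Remark \ref{r: Fg vertex groups} to reduce to the case in which every vertex group of $\mathcal{G}$ is a finitely generated free group. The plan then rests on two ingredients: a structural fact about finitely generated subgroups of free groups, and a gluing argument driven by $1$-acylindricity.

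For the local ingredient, fix a vertex $v$ and let $\mathcal{E}_v$ be the finite family of images inside $\mathcal{G}_v$ of the edge groups incident to $v$. Each member of $\mathcal{E}_v$ is a finitely generated subgroup of the free group $\mathcal{G}_v$, hence, by Marshall Hall's theorem, a free factor of some finite-index subgroup; and $1$-acylindricity says exactly that $\mathcal{E}_v$ is a malnormal family in $\mathcal{G}_v$. Using Stallings' folded graphs together with Wise's canonical completion and retraction, I would produce a finite-index subgroup $K_v\le\mathcal{G}_v$ in which every member of $\mathcal{E}_v$, and every conjugate of one meeting $K_v$, is \emph{simultaneously} a free factor — malnormality being precisely what lets the various completions at $v$ be carried out without interfering with one another.

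The second, and harder, step is to assemble these local choices into a single finite-index subgroup $H\le G=\pi_1(\mathcal{G})$. One needs finite quotients $\mathcal{G}_v\twoheadrightarrow Q_v$ with kernels inside $K_v$, chosen so that along each edge $e=[v,w]$ the two maps restrict to the same quotient of $\mathcal{G}_e$, so that the $Q_v$ glue into a finite quotient $G\twoheadrightarrow Q$; then $H=\ker(G\to Q)$, intersected if necessary with a further finite-index subgroup to ensure $H\cap\mathcal{G}_v\le K_v$ for all $v$. I expect the main obstacle to be exactly this compatibility: arranging that the finite quotients of the vertex groups agree on edge groups and patch together. This is where $1$-acylindricity is indispensable — it is what makes the profinite topology on $G$ ``efficient'', inducing the full profinite topology on each vertex and each edge group with all of these separable in $G$, and that efficiency is what supplies the compatible finite quotients one needs.

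Finally I would check that the induced splitting of $H$ has the stated properties. Its Bass--Serre tree is the tree $T$ on which $G$ already acts, so the vertex groups of $H$ are the subgroups $H\cap\mathcal{G}_v^{\,g}$ — free groups in which $H\cap\mathcal{G}_e^{\,g}$ is a free factor by the first two steps — hence the decomposition is clean; and $1$-acylindricity, being a condition on stabilisers of paths in $T$, passes at once to the subgroup $H$.
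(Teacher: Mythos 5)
The paper offers no proof of this statement: it is quoted verbatim (under slightly weaker, ``thin'', hypotheses) as Theorem 11.3 of \cite{Wise2}, so the citation you mention at the outset is the intended justification. Judged as a reconstruction of Wise's argument, your sketch has a genuine gap, and it sits exactly where you locate the ``main obstacle''. You propose to obtain the compatible finite quotients of the vertex groups by invoking efficiency of the profinite topology on $G=\pi_1(\mathcal{G})$, asserting that $1$-acylindricity makes the topology efficient. No such implication is available at this stage: even residual finiteness of $G$ is the main theorem of \cite{Wise2} and is \emph{deduced from} the virtually-clean statement, not the other way round, and in the present paper efficiency is proved (Lemma \ref{cleanefficient}) only for \emph{clean} graphs of groups, with Wise's residual finiteness quoted as the first step of that proof. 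Since there exist finite graphs of finitely generated free groups with finitely generated edge groups that are not residually finite at all, the existence of even one finite quotient of $G$ with the required behaviour on vertex groups is precisely the hard content of the theorem; resolving it by appeal to efficiency assumes the conclusion.

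The local step is also not free of charge: producing a single finite-index $K_v\le\mathcal{G}_v$ in which every member of the malnormal family of edge images, together with all relevant conjugates, is \emph{simultaneously} a free factor is substantially more than Marshall Hall's theorem, and the canonical completion and retraction you invoke is Haglund--Wise technology that postdates \cite{Wise2}; ``malnormality prevents interference'' is an assertion, not an argument. Wise's actual proof runs along different lines (clean covers of the vertex graphs and a careful combinatorial gluing to make the local covers consistent along edges), and that gluing is where essentially all the work lies. Your final paragraph is fine: $1$-acylindricity of the induced decomposition does pass trivially to $H$, and cleanness would follow once the two earlier steps were genuinely established; also note that the appeal to Remark \ref{r: Fg vertex groups} is unnecessary here, since the theorem's hypotheses already assume finitely generated vertex groups (that remark is used in the paper to reduce Theorem \ref{t: Main Theorem} to this case). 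So: as a citation your answer matches the paper; as a standalone proof it is incomplete at its central step.
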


Combining the results of this section, it is enough to prove Theorem \ref{t: Main Theorem} for clean graphs of finitely generated groups.  In the next section, we will prove the following.

\begin{prop}\label{p: Clean case}
Let $\mathcal{C}$ be a finite, clean, 1-acylindrical graph of free groups with finitely generated edge groups.  Then $C=\pi_1(\mathcal{C})$ is conjugacy separable.
\end{prop}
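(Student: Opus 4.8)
The plan is to realise $C=\pi_1(\mathcal C)$ as the fundamental group $\Pi_1(\hat{\mathcal C})$ of the profinite graph of groups $\hat{\mathcal C}$, so that $\hat C$ acts on the associated profinite tree $\hat T$, and then to compare the action of $C$ on its Bass--Serre tree $T$ with the action of $\hat C$ on $\hat T$. Two preliminary facts about the clean case should be established first. \textbf{(Efficiency.)} Because each vertex group $\C_v$ is a finitely generated free group in which every edge group is a free factor, Marshall Hall's theorem together with a patching of finite quotients along the (finite) underlying graph shows that $\mathcal C$ has efficient profinite topology: $C$ is residually finite (already a case of \cite{Wise2}), each vertex and each edge group is separable in $C$, and the profinite topology of $C$ restricts to the full profinite topology on each of them; equivalently, $\widehat{\C_v}\to\hat C$ and $\widehat{\C_e}\to\hat C$ are injections onto closed subgroups. \textbf{(Profinite $1$-acylindricity.)} Using the separability properties of free factors in free groups, one upgrades the malnormality of the family of edge groups inside each free vertex group to the statement that the stabilisers of any two adjacent edges of $\hat T$ intersect trivially; that is, $\mathcal C$ is profinitely $1$-acylindrical.

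With these in place, fix $g_1,g_2\in C$ and $\gamma\in\hat C$ with $g_1=g_2^{\gamma}$; the goal is to produce $c\in C$ with $g_1=g_2^{c}$. Every element of $C$ acts on $T$, hence on $\hat T$, either elliptically (fixing a vertex, i.e.\ conjugate into a vertex group) or hyperbolically (with an invariant line), and a $C$-hyperbolic element fixes no vertex of $\hat T$ because its translation length on $\hat T$ agrees with that on $T$. Thus $g_1$ is $T$-elliptic if and only if $g_2$ is, and the two cases may be treated separately.

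For the elliptic case, one first shows that the vertex groups are conjugacy distinguished in $C$, which follows from their separability together with the structure of the action on $\hat T$. One then arranges, after replacing $g_1$ by a $C$-conjugate, that $g_1\in\C_v$ and $g_2\in\widehat{\C_v}$; since the fixed subtree of $g_2$ in $\hat T$ meets both $v$ and $\gamma^{-1}v$, profinite $1$-acylindricity forces $\gamma^{-1}v$ to be equal or adjacent to $v$, and in the adjacent case one descends into an edge group (again free, and conjugacy distinguished by the same argument), iterating until the two elements lie in a common free vertex group with $\gamma$ in its profinite completion. Since free groups are conjugacy separable, $g_1$ and $g_2$ are then conjugate in $\C_v\le C$.

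The hyperbolic case is the heart of the matter. Let $L_i\subseteq T$ be the axis of $g_i$ and let $D_i\subseteq\hat T$ be the minimal invariant subtree of the procyclic group $\overline{\langle g_i\rangle}$; then $L_i\subseteq D_i$ and $\gamma D_2=D_1$. Picking an edge $\epsilon$ of $L_2$, the edge $\gamma\epsilon$ lies on $D_1$, and since $D_1/\overline{\langle g_1\rangle}$ is the finite quotient $L_1/\langle g_1\rangle$, one may — after multiplying $\gamma$ on the left by an element of $\overline{\langle g_1\rangle}$, which centralises $g_1$ and hence does not disturb the conjugacy — assume $\gamma\epsilon$ is an edge $\epsilon'$ of $L_1\subseteq T$. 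The plan is then to replace $\gamma$ by an element $c\in C$ still carrying $L_2$ to $L_1$ orientation-preservingly; once this is done, as $g_1$ and $g_2$ have equal translation length on $\hat T$ and hence on $T$, the conjugate $c^{-1}g_1c$ and $g_2$ translate the common line $L_2$ by the same amount in the same direction, so $(c^{-1}g_1c)g_2^{-1}$ fixes $L_2$ pointwise and is trivial by $1$-acylindricity, giving $g_1=g_2^{c}$. The ingredients for the replacement are the separability of the edge groups (so that $C$-orbits of edges of $T$ are closed in $\hat T$), the identity $Z_{\hat C}(g_i)=\overline{Z_C(g_i)}$ — which follows from the $1$-acylindrical action, since the pointwise stabiliser of $D_i$ in $\hat C$ is trivial and so $Z_{\hat C}(g_i)$ embeds in the procyclic translation group of $D_i$ — and the conjugacy-distinguishedness of the edge groups, in the form that edge stabilisers conjugate in $\hat C$ along matched edges of $D_1$ and $D_2$ are conjugate in $C$; these are fed through the profinite-tree technology of \cite{RZ, profinitetrees}. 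The main obstacle is exactly this replacement step: a priori $\gamma$ tells us only that the relevant cyclic and edge subgroups are conjugate in the profinite completion, and promoting this to honest conjugacy in $C$ is where $1$-acylindricity must genuinely be used (via profinite $1$-acylindricity and conjugacy-distinguishedness of the edge groups), after which unique roots (Lemma~\ref{uniqueroots}) removes the remaining ambiguity by $\pm1$ and by roots.
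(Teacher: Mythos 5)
Your two preliminary facts coincide with the paper's own route: efficiency of the profinite topology in the clean case (Lemma \ref{cleanefficient}, proved via Marshall Hall's theorem and a characteristic core, passing to a finite graph of finite groups) and profinite $1$-acylindricity (Lemma \ref{acylindrical}, proved with minimal invariant subtrees in the profinite Cayley tree of a vertex group). But at that point the paper does \emph{not} run a direct argument on the profinite tree: it quotes a ready-made criterion, Theorem \ref{conditions} (Theorem 5.2 of \cite{WtZ}), whose remaining hypotheses are checked inside the free vertex groups --- separability of the double cosets $\C_e g \C_f$ (Lemma \ref{doublecoset}, Gitik--Rips/Niblo \cite{Niblo}), conjugacy distinguishedness of the edge groups in the vertex groups (Lemma \ref{conjugacydistinguished}, \cite{WsZ}), and $\bar{\C}_e\cap\bar{\C}_f=\widehat{\C_e\cap\C_f}$ (Lemma \ref{intersection}). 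Your proposal instead undertakes to reprove that criterion from scratch by an elliptic/hyperbolic case analysis on $T$ versus $\hat T$.

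The difficulty is that the decisive part of that analysis is not actually carried out. In the hyperbolic case you yourself identify the ``replacement step'' --- upgrading the profinite conjugator $\gamma$, which carries an edge of $L_2$ to an edge of $L_1$, to an honest element $c\in C$ carrying $L_2$ to $L_1$ --- as the main obstacle, and then dispose of it by saying the ingredients are ``fed through the profinite-tree technology of \cite{RZ, profinitetrees}''; that step \emph{is} the content of the theorem being proved, so this is a plan rather than a proof. Moreover, the ingredient actually needed there is absent from your list: the edge-by-edge matching of the two axes requires separability of the double cosets $\C_e g\C_f$ in the free vertex groups (condition (1) of Theorem \ref{conditions}), and separability of the edge groups alone does not make the relevant $C$-orbits closed in $\hat T$ nor control how an element of $\hat C$ can carry a segment of $T$ onto another. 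In the elliptic case, the claim that the vertex groups are conjugacy distinguished in $C$ ``follows from their separability together with the structure of the action'' is again precisely what has to be proved (the known input, Lemma \ref{conjugacydistinguished}, gives conjugacy distinguishedness inside the \emph{vertex} groups, not in $C$), and the identity $Z_{\hat C}(g)=\overline{Z_C(g)}$ is asserted rather than established. So as written there is a genuine gap; the efficient repair is the paper's: verify the hypotheses of Theorem 5.2 of \cite{WtZ} (all immediate for finitely generated subgroups of free groups) and invoke it, rather than re-derive it.
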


\begin{proof}[Proof of Theorem \ref{t: Main Theorem}]
Let $\mathcal{G}$ be a finite, 1-acylindrical graph of free groups with finitely generated edge groups.  The free product of two conjugacy separable groups is itself conjugacy separable \cite{Stebe}. As all free groups are conjugacy separable, Remark \ref{r: Fg vertex groups} reduces us to the case in which the vertex groups of $\mathcal{G}$ are finitely generated.

Lemma \ref{uniqueroots} implies that $\pi_1(\mathcal{G})$ has unique roots.  Therefore, by Lemma \ref{separabilitypasses}, it is enough to prove that a finite-index subgroup of $\pi_1(\mathcal{G})$ is conjugacy separable.  Proposition \ref{p: Clean case} asserts that the finite-index subgroup of Theorem \ref{t: Wise main theorem} is conjugacy separable, which completes the proof.
\end{proof}

\section{The proof of the clean case}\label{s: Proof of clean}

In light of the last section, to prove Theorem \ref{t: Main Theorem} it remains only to prove Proposition \ref{p: Clean case}.  The following theorem (Theorem 5.2 of \cite{WtZ}) gives conditions for conjugacy separability.

\begin{thm}\label{conditions}
Let $\mathcal{G}$ be a finite graph of groups with conjugacy separable vertex groups, and let $G=\pi_{1}(\mathcal{G})$. Suppose that the profinite topology on $\mathcal{G}$ is efficient and that $\mathcal{G}$ is profinitely 2-acylindrical. For any vertex $v$ of the underlying graph, and incident edges $e$ and $f$, suppose furthermore that the following conditions hold:
\begin{enumerate}
\item for any $g\in \mathcal{G}_{v}$ the double coset $\mathcal{G}_{e}g\mathcal{G}_{f}$ is separable in $\mathcal{G}_{v}$;
\item the edge group $\mathcal{G}_{e}$ is conjugacy distinguished in $\mathcal{G}_{v}$;
\item the intersection of the closures of $\mathcal{G}_{e}$ and $\mathcal{G}_{f}$ in the profinite completion of $\mathcal{G}_{v}$ is equal to the profinite completion of their intersection, \emph{i.e.} $\bar{\mathcal{G}}_{e}\cap\bar{\mathcal{G}}_{f}=\widehat{\mathcal{G}_{e}\cap \mathcal{G}_{f}}$.
\end{enumerate}
Then $G$ is conjugacy separable.
\end{thm}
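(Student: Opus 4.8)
The plan is to work inside the profinite completion $\hat G = \Pi_1(\hat{\mathcal G})$, which by efficiency of the profinite topology contains $G$ and acts on the profinite Bass--Serre tree $\hat T$, with the usual tree $T$ embedded $G$-equivariantly. Conjugacy separability is the statement that whenever $x,y \in G$ satisfy $x = y^\gamma$ for some $\gamma \in \hat G$ there is already a $g \in G$ with $x = y^g$; so I would fix such $x,y,\gamma$ and argue according to the dynamics of $x$ on $\hat T$. The organising principle is the elliptic/hyperbolic dichotomy: every element of $\hat G$ acts on $\hat T$ either with a fixed point or with a well-defined axis, and since this is conjugacy-invariant it holds simultaneously for $x$ and $y$. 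For elements of $G$ the profinite dichotomy must be shown to agree with the classical one on $T$, and here profinite $2$-acylindricity is what prevents an abstractly hyperbolic element from acquiring a spurious profinite fixed point. The proof thus splits into the elliptic and the hyperbolic cases.

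In the elliptic case, after conjugating in $G$ I may assume $x$ lies in a vertex group $\mathcal G_v$, and $y$ likewise lies, after a $G$-conjugacy, in some vertex group; the task is to move $y$ by an element of $G$ into $\mathcal G_v$ and identify it with $x$. The one real ambiguity is whether $x$ is conjugate into an incident edge group, so that its fixed set in $\hat T$ exceeds a single vertex; condition (2), that each edge group is conjugacy distinguished in its vertex group, is exactly what detects this and pins down the correct vertex for $y$. Once $x$ and $y$ sit in a common $\mathcal G_v$, profinite $2$-acylindricity confines $\gamma$ to the stabiliser $\bar{\mathcal G}_v$, so they are conjugate in $\bar{\mathcal G}_v$; conjugacy separability of $\mathcal G_v$, together with the fact (from efficiency) that the profinite topology of $G$ induces that of $\mathcal G_v$, then upgrades this to conjugacy in $\mathcal G_v \le G$.

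The hyperbolic case carries the real weight. Here $x$ and $y$ have axes in $T$, and the plan is to prove a conjugacy criterion: two hyperbolic elements of $G$ are conjugate precisely when, after cyclically permuting the edge--vertex sequence traversed by one axis and conjugating by suitable edge-group elements, their associated words agree --- and to show that the identical criterion governs conjugacy in $\hat G$. Granting this, profinite conjugacy of $x$ and $y$ produces a matching of their axes and of the local data at each vertex of a fundamental domain. Separability of the double cosets $\mathcal G_e g \mathcal G_f$ in $\mathcal G_v$, condition (1), is then the tool that lets me adjust the conjugator vertex-by-vertex so that each correction is realised in $G$ rather than merely in $\bar{\mathcal G}_v$, while the intersection condition (3), $\bar{\mathcal G}_e \cap \bar{\mathcal G}_f = \widehat{\mathcal G_e \cap \mathcal G_f}$, ensures that these finitely many local choices can be made consistently across the shared edges.

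The main obstacle I expect is exactly this synchronisation in the hyperbolic case. Profinite $2$-acylindricity should pin the conjugator down to a product of edge-group corrections strung along the axis, but converting such a $\gamma \in \hat G$ into an honest $g \in G$ requires descending every local correction at once, and it is the combination of double-coset separability (1) and the intersection condition (3) that makes this simultaneous descent possible. Establishing the profinite conjugacy criterion, and verifying that acylindricity really does confine the conjugator to this manageable form rather than letting it entangle the axis with the rest of $\hat T$, is the technical heart of the argument.
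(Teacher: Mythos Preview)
The paper does not prove this theorem at all: it is quoted verbatim as Theorem~5.2 of \cite{WtZ} and used as a black box. The role of Section~\ref{s: Proof of clean} is only to verify, for a clean $1$-acylindrical graph of free groups, the hypotheses of this cited result (efficiency, profinite acylindricity, and conditions (1)--(3)), not to reprove it. So there is no proof in the present paper against which to compare your proposal.

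For what it is worth, your outline is the standard strategy used in the cited source and its predecessors (e.g.\ \cite{RZ}, \cite{WsZ}): pass to $\hat G=\Pi_1(\hat{\mathcal G})$ acting on the profinite Bass--Serre tree, split into elliptic and hyperbolic cases, use condition~(2) together with conjugacy separability of the vertex groups for the elliptic case, and in the hyperbolic case compare axes and use conditions~(1) and~(3) to descend the conjugator from $\hat G$ to $G$. Your identification of the hyperbolic synchronisation as the crux, and of profinite acylindricity as what confines the conjugator, is accurate. What your sketch leaves implicit---and what carries the real work in \cite{WtZ}---is the precise formulation and proof of the profinite conjugacy criterion for hyperbolic elements, and the argument that an abstractly hyperbolic element of $G$ remains hyperbolic on $\hat T$; both require nontrivial input from the theory of profinite trees.
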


To prove Proposition \ref{p: Clean case}, we shall show that the hypotheses of Theorem \ref{conditions} apply.   Noting that the vertex groups of $\mathcal{C}$ are all free (as they are finite-index subgroups of a finitely generated free group), Lemma \ref{cleanefficient} gives the efficiency of the topology on $\mathcal{C}$.  Lemma \ref{acylindrical} tells us that $\mathcal{C}$ is profinitely 1-acylindrical, which is stronger than being profinitely 2-acylindrical.  We apply Lemmas \ref{doublecoset}, \ref{conjugacydistinguished} and \ref{intersection} to give conditions (1), (2) and (3) of Theorem \ref{conditions} respectively. Thus Theorem \ref{conditions} implies that $C=\pi_1(\mathcal{C})$ is conjugacy separable. Combining this with Lemma \ref{separabilitypasses} proves Proposition \ref{p: Clean case} and hence Theorem \ref{t: Main Theorem}.

We start by proving efficiency.

\begin{lemma}\label{cleanefficient}
If $\mathcal{G}$ is a finite, clean graph of groups then $G=\pi_{1}(\mathcal{G})$ has efficient profinite topology.
\end{lemma}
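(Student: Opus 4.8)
The plan is to establish the three defining conditions of efficiency in turn: that $G=\pi_1(\mathcal{G})$ is residually finite, that each vertex and edge group is separable in $G$, and that the profinite topology on $G$ induces the full profinite topology on each such subgroup. Since $\mathcal{G}$ is clean, its vertex groups are finitely generated free groups, and each edge group is (identified with) a free factor of its vertex groups. The key structural fact I would exploit is that in a free group $F$, a free factor $A$ is a \emph{virtual retract}: there is a finite-index subgroup of $F$ containing $A$ onto which $A$ retracts — indeed $A$ itself is a retract of $F$. More generally, by a theorem of Hall, finitely generated subgroups of free groups are separable and are virtual retracts, and this is what makes the profinite topologies behave well.

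First I would handle residual finiteness. By a standard criterion (essentially due to the work on efficient graphs of groups, e.g. as in \cite{WtZ} or \cite{Wise2}), a finite graph of residually finite groups has residually finite fundamental group provided the edge groups are closed in the profinite topology of the vertex groups and that topology is induced compatibly; the cleanness hypothesis gives this immediately because a free factor $A \le F$ is not merely separable but a retract, so every finite quotient of $A$ extends (via the retraction) to a finite quotient of $F$ restricting to it. This simultaneously shows that the profinite topology on each vertex group $\mathcal{G}_v$ induces the full profinite topology on each incident edge group $\mathcal{G}_e$.

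Next, to show the profinite topology on $G$ induces the profinite topology on each vertex group $\mathcal{G}_v$ (and hence, by the previous paragraph and transitivity, on each edge group), I would argue that $\mathcal{G}_v$ is a virtual retract of $G$, or at least that every finite-index subgroup of $\mathcal{G}_v$ is the intersection with $\mathcal{G}_v$ of a finite-index subgroup of $G$. The mechanism: given a finite-index subgroup $K \le \mathcal{G}_v$, pass to its characteristic core (as defined in the Notation subsection) so that $K$ is characteristic of finite index in $\mathcal{G}_v$; then build a finite graph of groups covering $\mathcal{G}$ in which the vertex group over $v$ is $K$ — since $K$ is characteristic it is normal in $\mathcal{G}_v$ and, because $\mathcal{G}_e$ is a free factor, the edge groups meet $K$ in finite-index subgroups that are again free factors, so the covering graph of groups is again clean and finite. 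Its fundamental group is a finite-index subgroup of $G$ meeting $\mathcal{G}_v$ inside $K$. Refining further across all vertices and edges simultaneously yields a finite-index subgroup of $G$ whose intersection with $\mathcal{G}_v$ lies in $K$, which is exactly what induced topology requires. Separability of each vertex and edge group in $G$ then follows from a similar covering-space argument (or is a formal consequence once the induced-topology statements are in hand, combined with separability of the subgroups in the vertex groups).

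The main obstacle I expect is the bookkeeping in the covering-graph-of-groups construction: one must check that when passing to a finite-index subgroup $H \le G$, the induced splitting of $H$ really does have the intersections $H \cap \mathcal{G}_v^{\,g}$ as vertex groups with the expected (finite) index and that cleanness is preserved — the point where cleanness is essential, since a general finitely generated subgroup of a free group need not stay a free factor after intersecting with a finite-index subgroup, whereas a free factor does (it remains a free factor of the corresponding finite-index subgroup). I would isolate this as the crux and verify it using the fact that a free factor $A$ of $F$ is a retract, so $F = A \ast B$ and any finite-index normal $N \trianglelefteq F$ gives $A \cap N$ a free factor of $N$ by the Kurosh subgroup theorem applied to the action on the relevant Bass–Serre/covering tree. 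Everything else is an assembly of Marshall Hall's theorem with the standard efficiency criteria.
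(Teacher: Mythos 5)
Your outline gets the easy half right (cleanness gives inducedness and separability of edge groups inside vertex groups, via the retraction onto a free factor), but the two genuinely hard steps are left as assertions, and one claimed shortcut is false. The crux is not, as you suggest, that free factors remain free factors in finite-index subgroups (your Kurosh remark); it is the \emph{simultaneous compatibility} of the chosen finite-index subgroups over all edges of the graph, and the conversion of such compatible local data into an actual finite quotient of $G$. Taking the characteristic core of $K$ in $\mathcal{G}_v$ controls only that one vertex group: for each edge $e$ (including loops and multiple edges, and with constraints propagating around cycles of the underlying graph) you must arrange that the pullbacks to $\mathcal{G}_e$ of the subgroups chosen at its two endpoints coincide, and two subgroups that are each characteristic in $\mathcal{G}_e$ need not be equal. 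Saying ``build a finite graph of groups covering $\mathcal{G}$ in which the vertex group over $v$ is $K$'' and ``refine further across all vertices and edges simultaneously'' is precisely the statement to be proved, and no mechanism is given. The paper's mechanism is different and does real work: embed all vertex groups in their free product $F$, use Marshall Hall's theorem \cite{Hall} to find a finite-index $B\leq F$ with $B\cap\mathcal{G}_v\subset N_v$, and pass to a finite-index \emph{characteristic} subgroup $B'$ of $F$; since each edge group is embedded in $F$ as a free factor of the same rank via either attaching map, and the two embeddings differ by an automorphism of $F$, the induced edge subgroups are automatically well defined. One then quotients to a finite graph of finite groups, whose fundamental group is virtually free and hence residually finite; this last step is what actually produces a finite-index normal subgroup of $G$ with the required intersection, and it is absent from your proposal.

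Two further points. First, separability of a vertex group in $G$ is \emph{not} a formal consequence of the induced-topology statements together with separability inside vertex groups: inducedness concerns finite-index subgroups of $\mathcal{G}_v$, while separability requires separating an element $g\in G\smallsetminus\mathcal{G}_v$ from $\mathcal{G}_v$. The paper runs a separate argument: write $g$ in reduced form $g_0e_1g_1\cdots e_kg_k$, choose at the relevant vertices finite-index subgroups $K_i$ containing $\mathcal{G}_{e_i}$ (possible because the edge groups are free factors) but missing $g_i$, redo the $F$-characteristic-subgroup construction, and check that the image of $g$ stays in reduced form in the quotient graph of finite groups, so a further finite quotient separates it from the image of $\mathcal{G}_v$. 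Second, the ``standard criterion'' you invoke for residual finiteness is false as stated: in a free group every finitely generated subgroup is separable and even a virtual retract, so its hypotheses hold for any finite graph of finitely generated free groups with finitely generated edge groups, yet such groups need not be residually finite (Wise's examples). The correct move, as in the paper, is simply to cite \cite{Wise2} for residual finiteness, or to extract it from the same quotient-to-a-graph-of-finite-groups construction once that is in place.
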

\begin{proof}
Note first that by \cite{Wise2}, $G$ is residually finite.

There are two remaining things we must show: that the profinite topology on $\mathcal{G}$ induces the full profinite topology on each of its vertex groups and edge groups, and that each vertex group and each edge group is separable in $\mathcal{G}$. But as $\mathcal{G}$ is clean the edge groups are free factors in free vertex groups, so the profinite topology at a vertex group induces the full profinite topology on each of its edge groups, and the edge groups are separable in the vertex groups. Thus we are reduced to showing that the full topology is induced at each vertex group, and that each vertex group is separable in $\mathcal{G}$; the corresponding facts for edge groups follow.

Let $v$ be a vertex of $\mathcal{G}$ and $N_{v}$ a finite-index normal subgroup of $\mathcal{G}_{v}$. We construct a finite-index normal subgroup $N$ of $G$ with $N\cap \mathcal{G}_{v}\subset N_{v}$. This will prove the first of the two remaining criteria. Let $F=*_{v\in V(\mathcal{G})}\mathcal{G}_{v}$, the free product of all of the vertex groups. By a theorem of Marshall Hall \cite{Hall} there is some finite-index $B\leq F$ of which $N_{v}$ is a free factor, so we must have that $B \cap \mathcal{G}_{v}\subset N_{v}$. Now let $B'$ be the characteristic core of $B$ in $F$ (indeed one can take $B'$ to be any finite-index characteristic subgroup of $F$ lying inside $B$).

Then by intersection this induces a characteristic subgroup $B_u=B'\cap \mathcal{G}_u$ at each vertex group $\mathcal{G}_u$. For if $\sigma$ is an automorphism of $\mathcal{G}_u$, then as $\mathcal{G}_u$ is a free factor of $F$ we may extend $\sigma$ to an automorphism $\Sigma$ of $F$ by setting it to be the identity on all the other generators.  As $B'$ is characteristic in $F$, so is invariant under $\Sigma$, it follows that $B_u$ is invariant under $\sigma$. The edge groups are free factors in the vertex groups, so by the same argument intersection with the characteristic subgroup in any vertex group gives a characteristic subgroup in each edge group. But each edge group $\mathcal{G}_e$ injects into two separate vertex groups, $\mathcal{G}_{u_{e,1}}$ and $\mathcal{G}_{u_{e,2}}$, each of which induces a characteristic subgroup inside the edge group. We show that these two are the same, \emph{i.e} that $\mathcal{G}_e\cap B_{u_{e,1}}=\mathcal{G}_e\cap B_{u_{e,2}}$. Because the edge groups are free factors in the vertex groups, $\mathcal{G}_e$ is embedded as a free factor of $F$ of the same rank via either of the adjoining vertex groups, so there is an automorphism of $F$ sending the image of one of these embeddings to the other. But because $B'$ is characteristic it is invariant under all automorphisms of $F$, so the induced edge subgroups $B_e=B'\cap \mathcal{G}_e$ are well-defined.

From here, we pass to a quotient $\Phi:G\rightarrow G'=\pi_{1}(\mathcal{G}')$ by quotienting out each vertex group $\mathcal{G}_u$ by $B_u$, and each edge group $\mathcal{G}_e$ by $B_e$. As $B_e=\mathcal{G}_e\cap B_u$ for any vertex $u$ adjoining an edge $e$, the attaching maps of $\mathcal{G}'$ are still injections. Note that $\mathcal{G}'$ has the same underlying graph as $\mathcal{G}$.

This is a finite graph of finite groups, so $G^{\prime}$ is virtually free, and hence residually finite. Thus we can find a finite-index subgroup $H\lhd G^{\prime}$ which misses everything in the image $\Phi(\mathcal{G}_{v})$. By composing $\Phi$ with quotienting by $H$, we get a homomorphism from $G$ to a finite group. We take the kernel of this homomorphism to be $N$; it has precisely the properties we wanted.

\smallskip
We must now prove the second of the two remaining criteria for efficiency: that the vertex groups are separable (\emph{i.e.} that each vertex group in $G$ is an intersection of finite-index subgroups of $G$). For this we follow a similar construction. Given a vertex group $\mathcal{G}_{v}$ and $g\in G\smallsetminus \mathcal{G}_{v}$ we want to find a finite-index normal subgroup $N$ of $G$ with $\mathcal{G}_v \leq N$ and $g\notin N$. Consider $G$ as the fundamental group of the graph of groups in the sense of \cite{Serre} with a basepoint at $v$ (this cavalier attitude to basepoints is here permissible because changing basepoints applies an automorphism to the group, but we are just interested in finding the existence of a subgroup, so we are free to undo the automorphism at the end). Now we take a reduced form $g=g_{0}e_{1}g_{1}\ldots e_{k}g_{k}$, which is non-trivial. Whenever we have $e_{i}=\bar{e}_{i+1}$, $g_{i}\notin \mathcal{G}_{e_{i}}$. Let $K_{i}\leq \mathcal{G}_{v_{i}}$ be finite-index such that $g_{i}\notin K_{i}$ and $\mathcal{G}_{e_{i}}<K_{i}$; such exists because $\mathcal{G}_{e_{i}}$ is a free factor in $\mathcal{G}_{v_{i}}$. Now let $B_{i}$ be a finite-index subgroup of $F$ with $B_{i}\cap \mathcal{G}_{v_{i}}\subset K_{i}$ (where $F$ is, as above, the free product of all of the vertex groups). Take $D$ to be intersection of all of the $B_{i}$ (in the event that there are no values of $i$ to consider, take $D=D^{\prime}=F$), and $D'$ to be some finite-index characteristic subgroup inside $D$. Then, as before, we take a quotient $G\rightarrow G^{\prime}$ where at each vertex group we quotient out the intersection of that group with $D'$. Then the image of the reduced form for $g$ is still reduced, as our construction specifically avoids the cases where it might collapse; in particular the image of $g$ is non-trivial. Again $G^{\prime}$ is virtually free and hence residually finite, so by composition we may find a finite quotient of $G$ in which the image of $g$ is distinct from the image of $\mathcal{G}_{v}$; we take $N$ to be the preimage of the image of $\mathcal{G}_v$.
\end{proof}

\begin{lemma}\label{acylindrical}
The graph of groups $\mathcal{C}$ is profinitely 1-acylindrical, \emph{i.e.} for adjacent edges $e$ and $f$ in the profinite tree associated with $\hat{C}$, the intersection of the edge stabilizers, $\hat{C}_{e}\cap\hat{C}_{f}$, is trivial.
\end{lemma}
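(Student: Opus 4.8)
The plan is to deduce profinite 1-acylindricality of $\mathcal{C}$ from the ordinary 1-acylindricality of $\mathcal{C}$ together with the efficiency of its profinite topology (Lemma \ref{cleanefficient}). The key mechanism is that efficiency guarantees $\widehat{C} = \Pi_1(\widehat{\mathcal{C}})$ acts on a profinite tree $\widehat{T}$ whose edge and vertex stabilizers are precisely the closures $\bar{C}_e$, $\bar{C}_v$ of the edge and vertex groups of $\mathcal{C}$ inside $\widehat{C}$, and these closures are isomorphic to the profinite completions $\widehat{C_e}$, $\widehat{C_v}$. So I need to show that for any two adjacent edges $e, f$ in $\widehat{T}$, the intersection $\widehat{C}_e \cap \widehat{C}_f$ is trivial.

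First I would reduce to the case where $e$ and $f$ share a vertex $v$ lying over a vertex of the underlying graph $\Gamma$, so that $\widehat{C}_e, \widehat{C}_f \le \widehat{C}_v = \bar{C}_v$, and $e, f$ correspond (after translating by an element of $\widehat{C}_v$) to edge groups incident to a fixed vertex group $\mathcal{C}_v$. Concretely, the two edge groups meeting at $v$ are conjugates $A^x$ and $B^y$ in $\bar{C}_v$ of closures $\bar{A}, \bar{B}$ of edge groups $A, B$ of $\mathcal{C}$ (images under attaching maps), with $x, y \in \bar{C}_v$; triviality of $\widehat{C}_e \cap \widehat{C}_f$ is then equivalent to triviality of $\bar{A}^x \cap \bar{B}^y$ in $\bar{C}_v$, i.e.\ of $\bar{A} \cap \bar{B}^{x^{-1}y}$. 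So it suffices to prove: for edge groups $A, B$ incident to a vertex group $C_v$ of $\mathcal{C}$, and any $g \in \bar{C}_v$, either $\bar{A} \cap \bar{B}^g$ is trivial, or $A = B$ and $g$ normalizes $\bar{A}$ — a profinite malnormality statement, the profinite analogue of the 1-acylindricality condition spelled out after the definition of $k$-acylindrical.

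The key step is the profinite malnormality of the set of edge subgroups in each vertex group. Here I would use that $C_v$ is free, $A$ and $B$ are free factors of $C_v$ (cleanness), and the family of edge groups is malnormal in $C_v$ in the ordinary sense (1-acylindricality). One then wants to show that malnormality of a separable collection of finitely generated subgroups of a free group, whose members are free factors and whose relevant double cosets are separable, lifts to malnormality of their closures in the profinite completion. This is exactly the kind of statement proved using separability: given $g \in \widehat{C_v}$ with $\bar{A}^g \cap \bar{B}$ nontrivial, one approximates $g$ by elements of $C_v$ in finite quotients and uses separability of $A$, of $B$, of the relevant double cosets $AgB$ (and the fact that $\bar{A} \cap \bar{B} = \widehat{A \cap B}$, which for distinct free factors is trivial) to force, in the limit, that $g$ lies in the normalizer of $\bar A$ and $A = B$. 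I would cite the relevant lemma from \cite{profinitetrees} or \cite{WtZ} on profinite malnormality of free factors in free groups, or prove it directly by the standard approximation argument.

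The main obstacle I expect is the "lifting malnormality to the profinite completion" step: ordinary malnormality in a free group does not automatically pass to closures, and one genuinely needs the extra separability hypotheses (separability of the edge groups and of double cosets, plus the intersection condition $\bar A\cap\bar B=\widehat{A\cap B}$) together with the fact that finitely generated subgroups of free groups are closed and their closures are their profinite completions. Getting the approximation argument right — choosing a cofinal system of finite quotients of $C_v$ in which $A$, $B$ and the double coset are simultaneously "visible," and extracting a consistent limit element witnessing $A = B$ and $g \in N(\bar A)$ — is where the care lies; everything else (the reduction to a single vertex group, the translation by $\widehat{C}_v$-elements) is bookkeeping in Bass–Serre theory for profinite trees.
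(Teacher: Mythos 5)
Your reduction to a single vertex group --- showing that for edge groups $A,B$ of a vertex group $F=\mathcal{C}_v$ and $\gamma\in\hat F$, a nontrivial intersection $\bar A\cap\bar B^{\gamma}$ forces $A=B$ and $\gamma\in\bar A$ --- is exactly how the paper's proof begins. But that profinite malnormality statement \emph{is} the heart of the lemma, and your proposal does not prove it: you defer either to ``the relevant lemma'' from \cite{profinitetrees} or \cite{WtZ} (no such off-the-shelf lemma is available; this statement is essentially what Lemma \ref{acylindrical} exists to establish) or to a ``standard approximation argument'' that you do not carry out and that, as sketched, does not work. Separability of $A$, of $B$, of the double cosets $AgB$, and the identity $\bar A\cap\bar B=\widehat{A\cap B}$ control intersections with the discrete group and closures of discrete double cosets, but they give no handle on $\bar A\cap\bar B^{\gamma}$ for a genuinely profinite $\gamma$: if $\delta\neq 1$ lies in $\bar A\cap\bar B^{\gamma}$, then in each finite quotient the images of $\bar A$ and $\bar B^{\gamma}$ meet nontrivially, but that carries essentially no information (images of any two infinite subgroups typically meet in a finite quotient), and replacing $\gamma$ by some $g\in F$ with the same image in that quotient gives you no way to manufacture a nontrivial element of $A\cap B^{g}$ in $F$ itself. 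You flag this as ``the main obstacle,'' but flagging it is not closing it.

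The paper closes it by a different mechanism, namely profinite Bass--Serre theory \emph{inside the vertex group}: $\hat F$ acts freely on its profinite Cayley tree $\hat T$; the closures $\bar A,\bar B$ have minimal invariant subtrees with $\hat T_{\bar A}/\bar A=T_A/A$ (Lemma 2.2 of \cite{RZ}); a nontrivial $\delta\in\bar A\cap\bar B^{\gamma}$ produces an edge $\hat e$ lying in both $\hat T_{\bar A}$ and $\gamma\hat T_{\bar B}$, so $\hat e=\alpha e=\gamma\beta f$ with $\alpha\in\bar A$, $\beta\in\bar B$ and $e,f$ edges of the discrete minimal subtrees; since $\hat T/\hat F=T/F$, there is $g\in F$ with $ge=f$, and now \emph{discrete} 1-acylindricality together with cleanness gives $A=B$ and $g\in A$, whence freeness of the action yields $\gamma=\alpha g^{-1}\beta^{-1}\in\bar A$. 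This geometric transfer from $\hat T$ back to $T$ is precisely what replaces the approximation argument you hoped for; without it (or an equivalent input) your proof is incomplete at its key step.
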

\begin{proof}
This is equivalent to saying that for every vertex $v\in V(\C)$ and incident edges $e,f\in E(\C)$ (not necessarily distinct), whenever $\bar{\C}_e\cap\bar{\C}_f^{\gamma}\neq\{1\}$  (closures being taken in the profinite vertex group $\hat{\C}_v$) for $\gamma\in\hat{C}_v$, we have $\C_e=\C_f$ and $\gamma\in\bar{\C}_e$.  Therefore let $\gamma \in \hat{\C}_v$ be such that $\bar{\C}_e\cap\bar{\C}_f^{\gamma}$ contains a non-trivial element $\delta$.

In what follows, for simplicity of notation we will write $F=\C_v$, $A=\C_e$ and $B=\C_f$.  The profinite free group $\hat{F}$ acts freely on its profinite Cayley tree $\hat{T}$. The closure of the edge group $\bar{A}$ has a minimal invariant subtree $\hat{T}_{\bar{A}}$, with the property that $T_{A}/A=\hat{T}_{\bar{A}}/\bar{A}$ (Lemma 2.2 of \cite{RZ}). There is a similar invariant subtree $\hat{T}_{\bar{B}}$, and of course $\gamma\hat{T}_{\bar{B}}$ is the minimal invariant subtree of $\bar{B}^\gamma$. Let $\hat{e}$ be an edge of the minimal invariant subtree of $\langle\delta\rangle$.  Then $\hat{e}$ is contained in $\hat{T}_{\bar{A}}$ and $\gamma\hat{T}_{\bar{B}}$.  Hence there is an edge $e$ of $T_A$ and an element $\alpha$ of $\bar{A}$ such that $\hat{e}=\alpha e$.  Similarly, there is an edge $f$ of $T_B$ and an element $\beta$ of $\bar{B}$ such that $\hat{e}=\gamma\beta f$.

Therefore, $\beta^{-1}\gamma^{-1}\alpha e=f$.  But because $\hat{T}/\hat{F}=T/F$, there is an element $g$ of $F$ such that $ge=f$. As $\mathcal{C}$ is 1-acylindrical and clean, $A$ and $B$ are free factors of $F$ and are either equal or have trivial intersection; it follows that $A=B$ and $g\in A$.  Therefore, using the freeness of the action of $\hat{F}$, $\gamma^{-1} = \beta g\alpha^{-1}$ which is in $\bar{A}$, as required.
\end{proof}

To show that the double cosets are separable in the vertex groups, we appeal to the fact that the vertex groups are finitely generated free groups. The following lemma was originally due to Gitik and Rips, and is proved in a more general form by Niblo in \cite{Niblo}.

\begin{lemma}\label{doublecoset}
For a finitely generated free group $F$, any subgroups $H_{1},H_{2}<F$, and any $g\in F$, the double coset $H_{1}gH_{2}$ is separable in $F$.
\end{lemma}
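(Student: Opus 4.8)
The plan is to reduce the statement to the closedness of a \emph{product} of two finitely generated subgroups in the profinite topology, and then to run the graph-theoretic argument of Gitik--Rips, carried out in greater generality in \cite{Niblo}; throughout we use that $H_1$ and $H_2$ are finitely generated, which holds in our application, where they are finitely generated edge groups.

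First I would reduce to a product. With the convention $H_2^{g}=gH_2g^{-1}$ one has $H_1gH_2=(H_1H_2^{g})\,g$, and $H_2^{g}$ is again finitely generated. Since right multiplication by $g$ is a self-homeomorphism of $F$ for the profinite topology, $H_1gH_2$ is closed as soon as the product $AB$ of any two finitely generated subgroups $A,B\le F$ is closed, so I would work with $A=H_1$ and $B=H_2^{g}$. Realise $F$ as $\pi_1(R)$ for a finite wedge of circles $R$, let $X_A\to R$ and $X_B\to R$ be the based covers corresponding to $A$ and $B$ (with basepoints $a_0$, $b_0$), and let $\Gamma_A\subseteq X_A$, $\Gamma_B\subseteq X_B$ be their finite Stallings cores. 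Reading a word $w\in F$ from a basepoint gives a vertex $a_0\cdot w$, and a short check shows that $x\in AB$ if and only if $(a_0\cdot x,b_0)$ and $(a_0,b_0)$ lie in the same connected component $Z_0$ of the fibre product $X_A\times_R X_B$; moreover $Z_0$ is itself a cover of $R$ with $\pi_1(Z_0)=A\cap B$, which is finitely generated by Howson's theorem.

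Given $x\notin AB$, the failure of this connectivity is therefore a finitary phenomenon, visible in $\Gamma_A$ and $\Gamma_B$ together with a bounded initial segment of the lift of $x$. I would enlarge $\Gamma_A$ and $\Gamma_B$ to \emph{finite} covers of $R$ (possible by M.\ Hall's theorem, with $A$ and $B$ becoming free factors of the corresponding finite-index subgroups), take the fibre product of these finite covers, and read off a finite quotient of $F$ separating the image of $x$ from the image of $AB$. The main obstacle is exactly this last bookkeeping: one must choose the finite covers large enough that the witness for $x\notin AB$ survives the passage from the infinite covers $X_A,X_B$ to finite ones, and one must check that the resulting finite-index data separates the \emph{product} $AB$ from $x$, not merely the (in general larger) subgroup $\langle A,B\rangle$. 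Controlling the interaction of the two covers along the relevant double coset is the technical heart of the argument, and is what is carried out in \cite{Niblo}.
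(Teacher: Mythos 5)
Your proposal is essentially the paper's own treatment of this lemma: the paper offers no argument, attributing the result to Gitik--Rips and citing \cite{Niblo}, and your reduction $H_1gH_2=(H_1H_2^{g})\,g$ (so that it suffices to show products of finitely generated subgroups are closed) together with the fibre-product membership criterion is a correct setup, with the genuinely technical separation step deferred to \cite{Niblo} exactly as the paper defers it. Your restriction to finitely generated $H_1,H_2$ is in fact necessary and not merely convenient --- for arbitrary subgroups the statement is false (take $H_1$ the normal closure of a relator defining a non-residually-finite one-relator quotient, $H_2$ and $g$ trivial) --- and it is harmless here, since the lemma is only applied to finitely generated edge groups and Niblo's theorem is stated for finitely generated subgroups.
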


Likewise the requirement that edge groups are conjugacy distinguished in vertex groups is taken care of by the fact that the vertex groups are free. The following lemma is Proposition 2.5 in a paper by Wilson and Zalesskii \cite{WsZ}.
\begin{lemma}\label{conjugacydistinguished}
Every finitely generated subgroup of a finitely generated virtually free group is conjugacy distinguished.
\end{lemma}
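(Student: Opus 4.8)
The plan is to run a Bass--Serre argument, entirely parallel to the proof of Lemma~\ref{acylindrical}. Write $G$ for a finitely generated virtually free group and $H\leq G$ a finitely generated subgroup. First I would fix the structure: $G$ is the fundamental group of a finite graph of finite groups, so it acts on a tree $T$ with finite edge and vertex stabilizers and finite quotient $Y=T/G$, and accordingly $\hat G$ acts on the associated profinite tree $\hat T$, into which $T$ embeds $G$-equivariantly with $\hat T/\hat G = Y = T/G$. Since free groups are LERF (Marshall Hall) and this property passes to finite extensions, $G$ is LERF; hence every finitely generated subgroup is closed and carries the topology induced from $G$ as its full profinite topology, so in particular $H$ is closed, $\hat H\cap G=H$, and the closure $\bar H$ of $H$ in $\hat G$ is just $\hat H$. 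A routine compactness argument, as in the profinite reformulation of conjugacy separability, then reduces the lemma to the statement: \emph{if $g\in G$ and $g^{\gamma}\in\bar H$ for some $\gamma\in\hat G$, then $g^{c}\in H$ for some $c\in G$}. As an infinite-order element of $G$ cannot be elliptic on $T$ (elliptic elements lie in conjugates of the finite vertex groups), I would split into the hyperbolic and elliptic cases.

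Suppose first that $g$ has infinite order, so $g$ is hyperbolic on $T$ with axis $A_g$. As $G$ is LERF, the closure $\overline{\langle g\rangle}$ in $\hat G$ is isomorphic to $\hat{\mathbb{Z}}$, and by Lemma~2.2 of \cite{RZ} (used as in Lemma~\ref{acylindrical}) the closure $\overline{A_g}$ in $\hat T$ is its minimal invariant subtree; conjugating by $\gamma$, $\gamma\overline{A_g}$ is the minimal invariant subtree of $\overline{\langle g^{\gamma}\rangle}\leq\hat H$, so $\gamma A_g$ lies inside the minimal $\hat H$-invariant subtree $\overline{T_H}$, the closure in $\hat T$ of the minimal $H$-invariant subtree $T_H$. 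Fix an edge $e$ of $A_g$, so $\gamma e\in\overline{T_H}$. Since $H$ is finitely generated, $T_H/H$ is finite and $\overline{T_H}/\hat H = T_H/H$ by \cite{RZ} again, so there is $\eta\in\hat H$ with $\eta\gamma e\in T_H\subseteq T$. Now $e$ and $\eta\gamma e$ both lie in $T$ and in a common $\hat G$-orbit, hence, because $\hat T/\hat G = T/G$, in a common $G$-orbit: $ce=\eta\gamma e$ for some $c\in G$. Then $c^{-1}\eta\gamma$ fixes $e$, so $c^{-1}\eta\gamma\in\hat G_e = G_e\leq G$; writing $\eta\gamma=c\sigma$ with $\sigma\in G_e$ we get $c\sigma\in G$ and $g^{c\sigma}=(g^{\gamma})^{\eta}\in\hat H$, whence $g^{c\sigma}\in\hat H\cap G=H$, as required.

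Now suppose $g$ has finite order, so $\langle g\rangle^{\gamma}$ is a finite subgroup of $\bar H=\hat H$. Since $H$ is itself finitely generated virtually free, it is the fundamental group of a finite graph of finite groups, and $\hat H$ acts on the corresponding profinite tree with these finite groups as stabilizers; every finite subgroup of $\hat H$ is conjugate in $\hat H$ into one of the vertex groups, which is a finite subgroup of $H$. Thus $g^{\gamma}$ is $\hat H$-conjugate to some $h_0\in H$, and so $g$ is conjugate to $h_0$ in $\hat G$. By conjugacy separability of finitely generated virtually free groups (a theorem of Dyer), $g$ is already conjugate to $h_0$ in $G$, hence conjugate into $H$.

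I expect essentially all the work to be in setting up the profinite Bass--Serre facts needed for the hyperbolic case --- that $\overline{A_g}$ really is the minimal invariant subtree of $\overline{\langle g\rangle}$, that $\overline{T_H}/\hat H = T_H/H$, and that the $\hat G$-orbits of edges of $T$ inside $\hat T$ are no coarser than their $G$-orbits --- but these are precisely the tools of \cite{RZ, profinitetrees} already used in the paper. The one place where the tree is left behind is the finite-order case, which relies on conjugacy separability of virtually free groups to pass from conjugacy in $\hat G$ back to conjugacy in $G$.
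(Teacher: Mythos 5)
Your proposal is essentially correct, but note that the paper does not prove this lemma at all: it is simply quoted as Proposition 2.5 of Wilson and Zalesskii \cite{WsZ}, so what you have written is in effect a self-contained reconstruction of a Wilson--Zalesskii-style argument rather than an alternative to an internal proof. The skeleton is sound: LERF for finitely generated virtually free groups identifies the closure $\bar H$ with $\hat H$ and gives $\bar H\cap G=H$; the compactness reduction to ``$g^{\gamma}\in\bar H$ for some $\gamma\in\hat G$ implies $g$ is $G$-conjugate into $H$'' is the standard reformulation; your hyperbolic case runs exactly parallel to Lemma \ref{acylindrical}, using minimal invariant subtrees of closures, $\overline{T_H}/\hat H=T_H/H$, $\hat T/\hat G=T/G$, and the finiteness (hence separability and closedness) of edge stabilizers to conclude $c^{-1}\eta\gamma\in G_e\subseteq G$; and the torsion case correctly combines the theorem that finite subgroups of the profinite fundamental group of a graph of finite groups are conjugate into vertex groups with Dyer's theorem that finitely generated virtually free groups are conjugacy separable --- an external ingredient not in this paper's bibliography, but a legitimate and probably unavoidable one at this level. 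The places you flag are indeed where the checking lies: Lemma 2.2 of \cite{RZ} is invoked here for a finitely generated subgroup of a virtually free group acting on its Bass--Serre profinite tree, rather than for subgroups of a free group acting on its Cayley tree as in Lemma \ref{acylindrical}, so its hypotheses (that $H$ is closed and carries the induced topology as its full profinite topology) must be verified, which your LERF observation does; and the containment $\gamma\overline{A_g}\subseteq\overline{T_H}$ needs uniqueness of the minimal invariant subtree of $\overline{\langle g^{\gamma}\rangle}$, which holds because $g^{\gamma}$ fixes no vertex of $\hat T$, all vertex stabilizers being finite. With those references made precise, your argument gives a complete proof; what it buys over the paper's bare citation is transparency about exactly which profinite Bass--Serre facts, plus conjugacy separability of virtually free groups, the lemma really rests on.
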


\begin{lemma}\label{intersection}
For all finitely generated subgroups $A,B$ of a finitely generated free group $F$, $\bar{A}\cap\bar{B}=\widehat{A\cap B}$ (closure taken in $\hat{F}$).
\end{lemma}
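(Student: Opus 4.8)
The plan is to use the standard fact that a finitely generated subgroup of a free group is a free factor of a finite-index subgroup (Marshall Hall's theorem), together with the good separability properties this yields in the profinite completion. First I would reduce to showing that $\bar A\cap\bar B\subseteq\widehat{A\cap B}$, since the reverse inclusion is immediate: $A\cap B$ is contained in both $A$ and $B$, so its closure is contained in $\bar A\cap\bar B$, and the natural map $\widehat{A\cap B}\to\hat F$ has image $\overline{A\cap B}$. (Here one uses that $F$ is subgroup separable, so that the profinite topology on $F$ induces the full profinite topology on $A\cap B$, hence $\widehat{A\cap B}=\overline{A\cap B}$; this is the Marshall Hall/Scott result that free groups are LERF.)

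For the forward inclusion, the key point is that $A$, $B$, and $A\cap B$ are all finitely generated, hence all closed in the profinite topology on $F$, and moreover the induced topology on each of them is its own full profinite topology. By Marshall Hall's theorem, $A$ is a free factor of some finite-index subgroup $F_A\le F$; passing to the characteristic core, I may take a finite-index \emph{normal} $M\unlhd F$ with $M\le F_A$, so that $A\cap M$ is a free factor of $M$ (intersecting a free-factor decomposition of $F_A$ with the finite-index subgroup $M$, via the Kurosh subgroup theorem). Taking the intersection of such subgroups for $A$ and for $B$, I get a single finite-index normal $N\unlhd F$ such that $A\cap N$ is a free factor of $N$ \emph{and} $B\cap N$ is a free factor of $N$.

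Now I would work inside $\hat N$, which is open of finite index in $\hat F$, with $\bar A\cap\hat N=\overline{A\cap N}$ and similarly for $B$ (using again that the topology is induced). In the free group $N$ with the two free-factor subgroups $A\cap N$ and $B\cap N$, I can choose a free basis of $N$ adapted to $A\cap N$, and then the retraction $N\to A\cap N$ killing the complementary basis elements is continuous, extending to a retraction $\hat N\to\overline{A\cap N}$; similarly for $B\cap N$. The element of $\bar A\cap\bar B$ (intersected with $\hat N$) lies in $\overline{A\cap N}\cap\overline{B\cap N}$. To conclude $\overline{A\cap N}\cap\overline{B\cap N}=\overline{(A\cap N)\cap(B\cap N)}=\overline{A\cap B\cap N}$, I would invoke the fact that the intersection of two free factors of a free group is again a free factor (this is where the combinatorial structure does the real work — it follows from considering the associated core graphs / Stallings foldings, or from the action on the Cayley tree), so that $(A\cap N)\cap(B\cap N)$ is a free factor of $N$, compatible with a common basis; then the profinite closure of an intersection of free factors equals the intersection of the closures because one can build a single basis of $N$ simultaneously adapted to $A\cap N$, $B\cap N$ and their intersection, making all three retractions compatible. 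Finally, $\overline{A\cap B\cap N}=\widehat{A\cap B}\cap\hat N$, and since $\hat N$ has finite index this upgrades to $\bar A\cap\bar B\subseteq\widehat{A\cap B}$ after a short argument passing between the finitely many cosets.

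The main obstacle I anticipate is the last combinatorial input: arranging a \emph{single} free basis of the finite-index subgroup $N$ that is simultaneously adapted to $A\cap N$, to $B\cap N$, and to their intersection, so that the three profinite retractions are mutually compatible. Purely knowing that each of the three is separately a free factor is not enough; one needs the stronger statement that two free factors of a free group, together with their intersection, admit a common compatible basis (equivalently, a statement about the core graph of $A\cap N$ embedding appropriately in that of $B\cap N$). This is classical — it can be extracted from Stallings' graph-theoretic description of subgroups of free groups — but it is the step that genuinely uses freeness rather than soft profinite topology arguments, and it is where I would be most careful to cite a precise reference (for instance the treatment of intersections of subgroups of free groups via folded graphs).
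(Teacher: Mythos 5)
Your reduction is fine as far as it goes: the easy inclusion, the identification $\overline{A\cap B}=\widehat{A\cap B}$ via LERF (note $A\cap B$ is finitely generated by Howson, and by M.~Hall the induced topology on it is its full profinite topology), and the Marshall Hall/Kurosh step producing a finite-index normal $N$ with $A\cap N$ and $B\cap N$ free factors of $N$, together with $\bar{A}\cap\hat{N}=\overline{A\cap N}$. The gap is in the decisive step. You need $\overline{A\cap N}\cap\overline{B\cap N}=\overline{(A\cap N)\cap(B\cap N)}$ for two free factors of $N$, and the justification you propose --- a single basis of $N$ simultaneously adapted to both free factors and their intersection, making the retractions compatible --- is not available: it is false that two free factors of a free group admit a common adapted basis. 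For example, in $N=F(a,b)$ the subgroups $P=\langle a\rangle$ and $Q=\langle bab^{-1}\rangle$ are both free factors with $P\cap Q=\{1\}$, but no basis of $N$ contains both $a$ and $bab^{-1}$, since their images in the abelianization $\mathbb{Z}^2$ coincide and so they generate a proper subgroup. The genuinely classical fact (the intersection of two free factors is again a free factor, via Kurosh) is true but does not give you that closure commutes with this intersection.

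Moreover, the free-factor case you have reduced to is essentially the full strength of the theorem, not an easier special case: in the example above, $\bar{P}\cap\bar{Q}=\{1\}$ is precisely the statement that $\overline{\langle a\rangle}$ meets its conjugate $b\overline{\langle a\rangle}b^{-1}$ trivially in $\hat{N}$, a profinite malnormality assertion of exactly the kind that requires profinite Bass--Serre theory or the Ribes--Zalesskii machinery (compare the proof of Lemma \ref{acylindrical}). This is why the paper does not argue by hand: it quotes Proposition 2.4 of \cite{WsZ} for $\bar{A}\cap\bar{B}=\overline{A\cap B}$ and then, just as you do, upgrades $\overline{A\cap B}$ to $\widehat{A\cap B}$ using separability and the induced topology. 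To complete your argument you would need either to cite that result (making your Marshall Hall reduction unnecessary) or to supply a genuine proof of the closure identity for free factors, e.g.\ by an action on a profinite tree with trivial edge stabilizers; the compatible-basis shortcut cannot be repaired as stated.
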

\begin{proof}
Proposition 2.4 in \cite{WsZ} gives us that $\bar{A}\cap\bar{B}=\overline{A\cap B}$. But we are in a free group, so $A\cap B$ is separable, and $\overline{A\cap B}=\widehat{A\cap B}$.
\end{proof}

This completes the proof of Proposition \ref{p: Clean case} and so of Theorem \ref{t: Main Theorem}.

\section{Applications}\label{s: Applications}

\subsection{The Rips construction}

In \cite{Wise1}, Wise provided a residually finite version of the Rips construction.  To prove Theorem \ref{t: Rips construction}, we recall the details of his construction here.  Given a group $Q$ presented by $\langle a_{1},\ldots,a_{r}\mid R_{1},\ldots,R_{s}\rangle$, we take $\Gamma$ to be the group presented by
 \begin{equation}
    \left\langle \parbox[c]{2cm}{$a_{1},\ldots,a_{r},\\x,y,t$} \hspace*{0.1cm} \vline \hspace*{0.1cm} \parbox[c]{8.5cm}{$R_{j}=Y_{j}tZ_{j}t^{-1}\hspace*{3.5cm}\rbrace \hspace*{0.1cm}1\leq j \leq s \\ \parbox[c]{6cm}{$x^{a_{i}}=A_{i}tB_{i}t^{-1} \hspace*{0.4cm} x^{a_{i}^{-1}}=D_{i}tE_{i}t^{-1}\\ y^{a_{i}}=J_{i}tK_{i}t^{-1} \hspace*{0.4cm} y^{a_{i}^{-1}}=L_{i}tM_{i}t^{-1} \\ t^{a_{i}}=S_{i}tT_{i} \hspace*{1cm} t^{a_{i}^{-1}}=U_{i}tV_{i}$}\hspace*{0.1cm} \rbrace \hspace*{0.1cm} 1\leq i \leq r$}\right\rangle,
 \end{equation}
where $\{A_{i},B_{i},D_{i},E_{i},J_{i},K_{i},L_{i},M_{i},S_{i},T_{i},U_{i},V_{i},Y_{j},Z_{j}:1\leq i\leq r,1\leq j\leq s\}$ is a set of distinct, freely reduced words in the letters $x^{\pm1},y^{\pm1}$ satisfying a $C'(1/6)$ small-cancellation condition. We take $N$ to be $\langle x,y,t \rangle$; this is normal as the relations not involving the $R_{j}$ precisely force it to be so. Note that $\Gamma/N\cong Q$ because setting the generators of $N$ equal to $1$ in the above presentation recovers our  original presentation for $Q$.

Now consider the same presentation, but with the relators cyclically permuted:
 \begin{equation}
    \left\langle \parbox[c]{2cm}{$a_{1},\ldots,a_{r},\\x,y,t$} \hspace*{0.1cm} \vline \hspace*{0.1cm} \parbox[c]{8.5cm}{$Z_{j}^{t}=Y_{j}^{-1}R_{j}\hspace*{3.5cm}\rbrace \hspace*{0.1cm}1\leq j \leq s \\ \parbox[c]{6cm}{$B_{i}^{t}=A_{i}^{-1}x^{a_{i}} \hspace*{0.7cm} E_{i}^{t}=D_{i}^{-1}x^{a_{i}^{-1}}\\ K_{i}^{t}=J_{i}^{-1}y^{a_{i}} \hspace*{0.7cm} M_{i}^{t}=L_{i}^{-1}y^{a_{i}^{-1}} \\ (T_{i}a_{i})^{t}=S_{i}^{-1}a_{i} \hspace*{0.2cm} (a_{i}V_{i}^{-1})^t=a_{i}U_{i}$}\hspace*{0.1cm} \rbrace \hspace*{0.1cm} 1\leq i \leq r$}\right\rangle.
 \end{equation}
Consider the collection of words on the left hand sides of these relators, before conjugation by $t$. These differ by at most one letter from members of the set of words with the small-cancellation condition above. Thus by small-cancellation theory they freely generate a subgroup of the free group on $\{a_{1},\ldots,a_{r},x,y,t\}$. Likewise the collection of words on the right hand sides freely generate a subgroup. Thus $\Gamma$ splits as a single HNN-extension of a finitely generated free group, with stable letter $t$. The small-cancellation condition further ensures that it is malnormal, by a result in \cite{Wise3}.  Equivalently, the decomposition of $\Gamma$ as an HNN extension is 1-acylindrical.  That $\Gamma$ is conjugacy separable is now an immediate application of Theorem \ref{t: Main Theorem}.  This completes the proof of Theorem \ref{t: Rips construction}.

\begin{remark}
As this work was being written up, the authors became aware of the paper \cite{minasyan_hereditary_2009} by Minasyan.  One can combine this with work of Haglund and Wise \cite{haglund_special_2008} to provide an alternative proof of Theorem \ref{t: Rips construction}.  However, the techniques of \cite{haglund_special_2008} and hence \cite{minasyan_hereditary_2009} do not extend to arbitrary 1-acylindrical graphs of free groups.
\end{remark}

\subsection{Discrete groups with isomorphic profinite completion}

By applying our conjugacy separable version of the Rips construction to a superperfect group $Q$ with unsolvable word problem that has a finite classifying space and no finite quotients, we obtain a short exact sequence $1 \rightarrow N \rightarrow \Gamma \rightarrow Q \rightarrow 1$ with $\Gamma$ (and hence $\Gamma \times \Gamma$) conjugacy separable. Bridson shows in \cite{Bridson} that such groups $Q$ exist, and explains that the fibre product $P \overset{u}\hookrightarrow \Gamma \times \Gamma$ has an unsolvable conjugacy problem. On the other hand, Bridson and Grunewald show that the profinite completion of $u$ induces an isomorphism of profinite completions $\hat{u}:\hat{P} \rightarrow \widehat{\Gamma \times \Gamma}$ \cite{BridGrun}. A finitely presented conjugacy separable group has solvable conjugacy problem.  Theorem \ref{t: Pair} follows.

Hence we see that conjugacy separability is not a property that can be detected in the profinite completion, even for finitely presented, residually finite groups, but requires in addition information about the particular injection of the group into its profinite completion.

\subsection{One-relator groups}

Let $F$ be a finitely generated free group and let $w\in F$ be a positive element, that is an element in which only positive powers of the generators appear.  The quotient $\Gamma=F/\langle\langle w\rangle\rangle$ is called a \emph{positive one-relator group}.  Such a group $\Gamma$ is torsion-free if and only if $w$ is not a proper power.  We refer the reader to \cite{lyndon_combinatorial_1977} for the definition of the $C'(1/6)$ small-cancellation condition on $w$, but we recall that if $w$ satisfies $C'(1/6)$ then $\Gamma$ is hyperbolic \cite{Gromov}.   If $w$ is positive and satisfies $C'(1/6)$ then we say that the corresponding one-relator group $\Gamma$ is a \emph{positive $C'(1/6)$ one-relator group}.

\begin{proof}[Proof of Theorem \ref{t: One-relator groups}]
The word $w$ involves at most finitely many generators of $S$, so by \cite{Stebe} we may reduce to the case in which $F$ is finitely generated.  A finitely generated, positive $C'(1/6)$ one-relator group $\Gamma$ has a finite-index subgroup $\Gamma'$ that splits as a 1-acylindrical graph of free groups (\cite{Wise3}, Theorem 1.1).  By Theorem \ref{t: Main Theorem}, therefore, $\Gamma'$ is conjugacy separable.  But $\Gamma$ is a torsion-free hyperbolic group and so by Lemma \ref{l: hyperbolic unique roots} has unique roots.  Therefore, by Lemma \ref{separabilitypasses}, $\Gamma$ is conjugacy separable.
\end{proof}

\end{document}